\documentclass[12pt, a4paper]{amsart}
\usepackage[english]{babel}
\usepackage{amsmath,amsfonts,amssymb,amsthm,graphics,epsfig}
\usepackage{hyperref}

\bibliographystyle{unsrturl}

\usepackage[margin=2.5cm]{geometry}
\usepackage{epstopdf}

\pagestyle{plain}

\newtheorem{thm}{Theorem}[section]
\newtheorem{lem}[thm]{Lemma}
\newtheorem{cor}[thm]{Corollary}

\theoremstyle{definition}
\newtheorem{defin}[thm]{Definition}
\newtheorem*{rem}{Remark}

\newcommand { \ib }[1] {\textit{\textbf{#1}}}

\begin{document}
\renewcommand{\ib}{\mathbf}
\renewcommand{\proofname}{Proof}
\renewcommand{\phi}{\varphi}
\makeatletter \headsep 10 mm \footskip 10 mm
\renewcommand{\@evenhead}%

\title{Belt diameter of $\Pi$-zonotopes}
\author{A. Garber}

\address{Lomonosov Moscow State University, Faculty of Mechanics and Mathematics, Department of Higher Geometry and Topology. Russia, 119991, Moscow, Vorob'evy gory, 1, A-1620.}
\address{Demidov Yaroslavl State University, Laboratory of Discrete and Computational Geometry, Russia, 150000, Yaroslavl, Sovetskaya str., 14.}
\email{alexeygarber@gmail.com}
\date{\today}

\begin{abstract}
A {$\Pi$-zonotope} is a zonotope that can be obtained from permutahedron by deleting zone vectors. Any face $F$ of codimension 2 of such zonotope generates its {\it belt}, i.e. the set of all facets parallel to $F$. The {\it belt diameter} of a given zonotope $Z$ is the diameter of the graph with vertices correspondent to pairs of opposite facets and with edges connect facets in one belt.

In this paper we investigate belt diameters of $\Pi$-zonotopes. We prove that any $d$-dimensional $\Pi$-zonotope ($d\geq 3$) has belt diameter at most 3. Moreover if $d$ is not greater than 6 then its belt diameter is bounded from above by 2. Also we show that these bounds are sharp. As a consequence we show that diameter of the edge graph of dual polytope for such zonotopes is not greater than 4 and 3 respectively.
\end{abstract}

\maketitle

\section{Zonotopes and parallelohedra}

\begin{defin}
A $d$-dimensional polytope $P$ is called a {\it parallelohedron} if $\mathbb{R}^d$ can be tiled in parallel copies of $P.$
\end{defin}

In 1897 Minkowski proved that any paralleohedron $P$ is centrally symmetric, has centrally symmetric facets, and projection of $P$  along any its face of codimension 2 is a two-dimensional parallelohedron \cite{Min}. Later Venkov \cite{Ven} proved that these three properties are sufficient for polytope to be a parallelohedron. The last property of Minkowski allows us to introduce a notion of {\it belt} as a set of (4 or 6) facets parallel to a given face of codimension 2. The facets of a belt are projected exactly into edges of two-dimensional parallelohedron, i.e. parallelogram or centrally symmetric hexagon.

Using a notion of belt we can introduce the belt diameter of a given parallelohedron $P$. We can construct the {\it Venkov graph} \cite{Ord} of $P$ in the following way. The vertex set of the Venkov graph is the set of pairs of opposite facets and two pairs of facets are connected with an edge if and only if there exist a belt containing both pairs. Then the {\it belt diameter} of parallelohedron $P$ is the diameter of its Venkov graph, in the other words the belt diameter of a parallelohedron is the maximal number of belts that we need to use in order to travel from one facet to another. A {\it belt path} of a parallelohedron is a sequence of its facets such that any two correspondent facets are in the same belt, so belt diameter of a polytope is the maximal length of the shortest belt path between two of its facets. Since every belt of a parallelohedron consist of 4 or 6 facets then the Venkov graph can be obtained by gluing vertices correspondent to opposite facets in the edge graph of the dual polytope.

One of the main conjecture in the parallelohedra theory is the Voronoi conjecture \cite{Vor} that claims that every parallelohedron is an affine image of Dirichlet-Voronoi polytope for some lattice. This conjecture was proved for several classes of parallelohedra in works of Voronoi \cite{Vor}, Zhitomirskii \cite{Zhit}, Erdahl \cite{Erd}, and Ordine \cite{Ord}. One of the main methods that was used in these work (except \cite{Erd}) is the method of canonical scaling introduced by Voronoi in \cite{Vor}. This method constructs a special function (canonical scaling) on the set of facets of parallelohedron by moving from one facet to another facet in the same belt. So relatively small belt diameter of parallelohedron $P$ can give us a way to prove the Voronoi conjecture for $P$ because it will be easier to prove the existence of canonical scaling for $P$. In this paper we will investigate belt diameters of one class of parallelohedra that described below.

\begin{defin}
A $d$-dimensional polytope $P$ is called a {\it zonotope} if $P$ can be represented as a projection of some cube $C^n$ of dimension $n\geq d.$ Equivalently, every zonotope can be represented as a Minkowski sum of finite number of segments. These segments can be written in the form $[\ib 0,\ib v_i], i=1,\ldots,n$ for {\it zone vectors} $\ib v_i.$ The zonotope with set of zone vectors $V=\{\ib v_1,\ldots, \ib v_n\}$ we will denote $Z(V).$
\end{defin}

There were established several results on parallelohedral zonotopes or space filling zonotopes. In 1974 P.~McMullen \cite{McM2} proved the necessary and sufficient condition for set of zone vectors $V$ to generate a space filling zonotope $Z(V).$

\begin{thm}[McMullen, 1974, \cite{McM2}]\label{2or3}
The $d$-dimensional zonotope $Z(V)$ is a parallelohedron if and only if projection of $V$ along any its $(d-2)$-dimensional subset consist of vectors of $2$ or $3$ directions.
\end{thm}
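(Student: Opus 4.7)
The plan is to deduce McMullen's theorem from Venkov's three-part characterization of parallelohedra by reducing each of the three conditions to a statement about zone vectors.

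First I would observe that the first two Venkov conditions come for free from the zonotopal structure. Since $Z(V)=\sum_{i=1}^{n}[\ib 0,\ib v_i]$ is a Minkowski sum of centrally symmetric segments, $Z(V)$ itself is centrally symmetric. Moreover, every face of a zonotope is again a zonotope (its zone vectors form a subset of $V$ obtained as the vectors orthogonal to an appropriate outer normal), so every facet of $Z(V)$ is centrally symmetric as well. Therefore only the third Minkowski--Venkov condition on codimension-$2$ projections needs to be translated into a condition on $V$.

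Next I would set up the correspondence between codimension-$2$ faces of $Z(V)$ and $(d-2)$-dimensional subsets of $V$. For a generic linear functional $\phi$ whose kernel is a $(d-2)$-dimensional subspace $L$ spanned by vectors of $V$, the $\phi$-extremal face of $Z(V)$ is parallel to $L$ and has as its zone vectors exactly the subset $W=V\cap L$. Conversely, every codimension-$2$ face of $Z(V)$ has the form $F=Z(W)+\ib t$ for some $(d-2)$-dimensional subset $W\subset V$ and a translation vector $\ib t$. Projecting $Z(V)$ along the span of $W$ then produces the $2$-dimensional zonotope $Z(\bar V)$, where $\bar V$ is the image of $V\setminus W$ under the projection (vectors of $W$ project to $\ib 0$). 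The number of distinct directions among the nonzero vectors of $\bar V$ equals the number of edge directions of this planar zonotope, so $Z(\bar V)$ is a $2k$-gon when there are $k$ directions.

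With this dictionary in place the theorem is almost immediate. A planar zonotope is a two-dimensional parallelohedron precisely when it is a parallelogram or a centrally symmetric hexagon, that is, when $k=2$ or $k=3$. Thus the condition that every projection of $V$ along a $(d-2)$-dimensional subset has $2$ or $3$ directions is exactly equivalent to the condition that every codimension-$2$ projection of $Z(V)$ is a two-dimensional parallelohedron. Combined with the two automatic conditions above and with Venkov's theorem, this gives both implications of Theorem~\ref{2or3}.

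The only point that requires real care, and which I would regard as the main technical obstacle, is step two: verifying cleanly that the codimension-$2$ faces of $Z(V)$ are in one-to-one correspondence with maximal $(d-2)$-dimensional subsets of $V$ and that projection along such a face coincides with projection along the linear span of its zone vectors. Once that face--subset dictionary is established, the rest of the argument is a one-line application of Venkov's criterion and the classification of $2$-dimensional parallelohedra.
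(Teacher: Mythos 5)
The paper does not prove Theorem~\ref{2or3} at all: it is imported as McMullen's result with a citation to \cite{McM2}, so there is no internal proof to compare against. Your route --- deriving it from the Minkowski--Venkov characterization stated in Section~1 --- is the natural modern argument and is sound in outline: central symmetry of $Z(V)$ and of its facets is automatic because every face of a zonotope is a (translated) zonotope $Z(V\cap L)$, so only the codimension-$2$ projection condition is at stake, and the classification of $2$-dimensional parallelohedra (parallelogram or centrally symmetric hexagon, i.e.\ $2$ or $3$ edge directions) converts it into the stated condition on $V$. Two small points deserve tightening. First, a nonzero linear functional on $\mathbb{R}^d$ has a kernel of dimension $d-1$, not $d-2$; what you want is a functional $\phi$ that vanishes on $L=\lin(W)$ and is generic among such, so that $V\cap\ker\phi=V\cap L$ and the $\phi$-maximal face is a translate of $Z(V\cap L)$ of dimension $d-2$ (equivalently, pull back vertices of the $2$-dimensional projection $\pi(Z(V))$ along $L$). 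This also handles the maximality issue you flag: a non-maximal $(d-2)$-dimensional subset $W$ and the maximal subset $V\cap\lin(W)$ give the same projection, so the condition over all subsets equals the condition over all codimension-$2$ faces. Second, be explicit that the ``only if'' direction uses Minkowski's necessity of the three conditions while the ``if'' direction uses Venkov's sufficiency; citing ``Venkov's characterization'' for both is fine provided you mean the combined Minkowski--Venkov theorem. With those clarifications your sketch is a complete and correct derivation, essentially the standard one, whereas the paper simply takes McMullen's theorem as a black box.
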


In 1999 R.~Erdahl \cite{Erd} proved the Voronoi conjecture for space filling zonotopes.

\begin{thm}[Erdahl, 1999, \cite{Erd}]
For any space filling zonotope $Z$ there exist an affine transformation $\mathcal{A}$ such that the zonotope $\mathcal{A}Z$ is the Dirichlet-Voronoi polytope for some lattice $\Lambda.$
\end{thm}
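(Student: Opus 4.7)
The plan is to use McMullen's Theorem~\ref{2or3} as the structural starting point: it tells us that every $(d-2)$-dimensional projection of the set of zone vectors $V$ produces vectors of only two or three directions. First I would translate this combinatorial condition into a linear-algebraic one, namely that $V$, after a suitable linear change of coordinates, can be arranged as a \emph{unimodular system}, meaning that every maximal linearly independent subset of $V$ is a basis of one and the same integer lattice $\Lambda$. In matroid terms the "no four-direction projection" condition is exactly the absence of a $U_{2,4}$ minor, which by Tutte's excluded minor theorem characterises regular matroids. This step converts the problem from one about an abstract zonotope to one about a lattice together with a distinguished regular spanning set.

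Next, given $\Lambda$ and its unimodular spanning set $V$, I would construct a positive definite quadratic form $q$ on $\mathbb{R}^d$ whose Dirichlet--Voronoi cell at the origin coincides, up to the affine transformation $\mathcal{A}$, with $Z(V)$. The natural candidate is $q(x)=\sum_i \sigma_i\,\langle \ib w_i,x\rangle^{2}$, where $\ib w_i$ is the normal to the pair of facets lying in the zone of $\ib v_i$ and $\sigma_i>0$ is a weight to be determined. Imposing that the DV cell of $q$ equal $Z(V)$ translates, via the facet-pairing symmetry of zonotopes, into a system of linear equations on $\{\sigma_i\}$: one relation per belt, with coefficients dictated by whether the belt is quadrilateral (two projected directions, giving a parallelogram) or hexagonal (three projected directions, giving a centrally symmetric hexagon). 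These $\sigma_i$ are exactly a Voronoi canonical scaling restricted to the zones.

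The hardest step is proving that this linear system admits a strictly positive solution. Consistency amounts to a cycle condition on the Venkov graph: the accumulated scaling around any closed loop of belts must be trivial. Here I would invoke Seymour's decomposition theorem for regular matroids to reduce to three base cases — graphic matroids, cographic matroids, and the sporadic $R_{10}$ — in each of which an explicit positive canonical scaling is available. For graphic matroids, for instance, $Z(V)$ is a generalised permutahedron and the $\sigma_i$ can be read off from the underlying cycle space; the cographic and $R_{10}$ cases are dispatched by direct construction. Assembling these scalings along Seymour's $1$-, $2$- and $3$-sum operations produces a global positive family $\{\sigma_i\}$, from which $q$, the lattice $\Lambda$, and finally the affine transformation $\mathcal{A}$ are all recovered.
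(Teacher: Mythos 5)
The paper does not prove this theorem; it simply cites Erdahl's 1999 article, so there is no internal proof to compare against. Judged on its own merits and against Erdahl's published argument, your proposal has the right first step but then departs substantially, and the departure introduces real gaps.

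The reduction of McMullen's condition to regularity of the underlying matroid is essentially correct, though your phrasing overstates Tutte's theorem: the absence of a $U_{2,4}$ minor only characterises \emph{binary} matroids, not regular ones. What saves the argument is that the matroid of $V$ is realised over $\mathbb{R}$, and a binary matroid representable over a field of characteristic $\neq 2$ is regular; you should say this explicitly, since otherwise the appeal to unimodularity does not follow. This much is in the spirit of what Erdahl (and also Deza--Grishukhin) use.

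Where the proposal diverges, and where I think the genuine gap lies, is in the middle and final steps. Erdahl's proof is built around the notion of a \emph{dicing}: a lattice-periodic arrangement of hyperplane families whose normals form a unimodular system. He shows that every space-filling zonotope is the zonotope of a dicing and that the lattice of a dicing, equipped with the form $q(x)=\sum_i\sigma_i\langle \ib d_i,x\rangle^2$ for \emph{any} positive weights matching the edge lengths, has the zonotope as its Dirichlet--Voronoi cell. The point is that for a unimodular system the would-be ``cycle condition'' you worry about is automatically satisfied; there is no delicate linear system to solve and no need for a decomposition theorem. Your plan instead treats the existence of a positive canonical scaling as the hard step and invokes Seymour's regular-matroid decomposition to reduce to graphic, cographic and $R_{10}$ pieces. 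That is a much heavier tool than the problem requires, and more importantly your proposal only \emph{names} the three base cases and the glueing along $1$-, $2$- and $3$-sums without actually verifying any of them; as written this is a placeholder, not a proof. You would need to show (i) that the weights $\sigma_i$ genuinely reproduce the given zonotope's facet structure after a suitable $\mathcal{A}$, (ii) that the claimed linear system per belt is consistent in each base case, and (iii) that the sums preserve positivity — none of which is done. So while the overall shape (unimodular $\Rightarrow$ DV cell for a positive form) is the right idea, the argument as proposed does not close, and it also does not match Erdahl's dicing-based route or Deza--Grishukhin's oriented-matroid reproof.
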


Later this result was reproved by M.~Deza and V.~Grishukhin using oriented matroids \cite{DG}.

One of the most famous zonotopes that is also a parallelohedron is a permutahedron.

\begin{defin}
The {\it permutahedron} $\Pi_d$ is the convex hull of $(d+1)!$ points in $\mathbb{R}^{d+1}$ whose coordinates are all permutations of numbers $1,2,\ldots, d+1.$ It is a zonotope with $\dfrac{d(d+1)}{2}$ zone vectors $\ib e_{ij}=\ib e_i-\ib e_j$ for $i<j$ where $\ib e_k$ is the $k$-th vector of the standard basis in $\mathbb{R}^{d+1}.$ It is easy to see that $\Pi_d$ is a $d$-dimensional polytope since all its vertices belongs to the hyperplane $x_1+\ldots+x_{d+1}=\frac{(d+1)(d+2)}{2}$ in $\mathbb{R}^{d+1}.$ Also $\Pi_d$ is a parallelohedron \cite{GP}.
\end{defin}

The set of zone vectors of $d$-dimensional permutahedron we will denote as $V(d).$

\begin{defin}
If the set of zone vectors of the zonotope $Z(V)$ is a subset of the set $V(d)$ then we will call $Z$ a {\it $\Pi$-zonotope}. 

It is easy to see that any such zonotope is parallelohedron because conditions of the theorem \ref{2or3} holds if we remove some zone vectors from the set $V$.
\end{defin}

In this paper we will investigate belt diameters of $\Pi$-zonotopes.

We will consider only the case $d\geq 3$ because case $d\leq 2$ is trivial.

\section{Graph representation of $\Pi$-zonotopes}

\begin{defin}
Given a $\Pi$-zonotope $Z=Z(V), V\subset V(d)$. A graph with $d+1$ vertices and the edge set $E$ is called a {\it graph of $Z(V)$} if there is such an enumeration of vertices of the graph by numbers $1,\ldots,d+1$ that and edge $(i,j)$ belongs to the $E$ if and only if one of the two opposite vectors $\pm(\ib e_i-\ib e_j)$ belongs to the set $V$ of zone vectors of $Z(V)$. We will denote such a graph $G_Z$.

In particular, if $Z$ is the $d$-dimensional permutahedron $\Pi_d$ then $G_{\Pi_d}$ is the complete graph with $d+1$ vertices.

And if we are given a graph $G$ with $d+1$ enumerated vertices then we can construct a $\Pi$-zonotope $Z=Z_G$ with set of zone vectors $\{\ib e_{ij}\}$ correspondent to edge set of $G.$
\end{defin}

In both constructions of a graph for a given zonotope or a zonotope for a given graph the re-enumerating of vertices of a graph does not change the metric and combinatorial properties of the zonotope since any re-enumerating of the vertices of a graph corresponds to permutation of the coordinate axis of the space $\mathbb{R}^{d+1}.$ So if graphs $G_Z$ and $G_Y$ of two $\Pi$-zonotopes are isomorphic then $Z$ and $Y$ are combinatorially equivalent.

However two combinatorially equivalent zonotopes could have different graphs. For example zonotopes of both graphs with 4 vertices on the next picture are 3-dimensional parallelepipeds because they are Minkowski sums of three linearly independent vectors. Namely vectors $\ib e_2-\ib e_1$, $\ib e_3-\ib e_2$ and $\ib e_4-\ib e_3$ for the left graph and $\ib e_2-\ib e_1$, $\ib e_3-\ib e_2$ and $\ib e_4-\ib e_2$ for the right one.

\begin{figure}[!ht]
\begin{center}
\includegraphics[height=2cm]{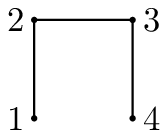}
\hskip 2cm
\includegraphics[height=2cm]{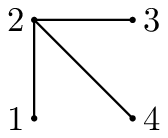}
\caption{Two different graphs correspondent to 3-dimensional parallelepiped.}
\end{center}
\end{figure}

In this section we will prove several combinatorial properties of $\Pi$-zonotopes using their representation as graphs.

\begin{lem}\label{dimgraph}
For a given graph $G$ with $d+1$ vertices and $k$ connected components the dimension of the correspondent $\Pi$-zonotope is equal to $d+1-k.$
\end{lem}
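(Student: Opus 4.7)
The plan is to reduce the claim to a standard statement about the edge space of a graph. Recall that for a zonotope $Z(V)$, the dimension of $Z(V)$ as a polytope equals the dimension of the linear span $\lin(V)$ of its set of zone vectors, since $Z(V)$ is (up to translation) the Minkowski sum of the segments $[\mathbf{0},\mathbf{v}_i]$. For $Z_G$ this set is $V=\{\mathbf{e}_i-\mathbf{e}_j:(i,j)\in E(G)\}\subset\mathbb{R}^{d+1}$, so it suffices to prove that $\dim\lin(V)=d+1-k$.

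First I would show the lower bound $\dim\lin(V)\geq d+1-k$. Let $C_1,\ldots,C_k$ be the connected components of $G$ with $|C_\ell|=s_\ell$, so that $s_1+\cdots+s_k=d+1$. In each $C_\ell$ fix a spanning tree $T_\ell$; it contributes $s_\ell-1$ vectors of the form $\mathbf{e}_i-\mathbf{e}_j$ with $i,j\in C_\ell$. A standard inductive argument on the number of leaves (remove a leaf edge $\mathbf{e}_i-\mathbf{e}_j$ where $i$ has degree one in $T_\ell$: then $\mathbf{e}_i$ appears only in this vector, so it cannot be cancelled in any nontrivial linear relation) shows that the tree vectors for a single component are linearly independent. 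Vectors coming from different components live in pairwise disjoint coordinate subspaces of $\mathbb{R}^{d+1}$ (only the coordinates indexed by $C_\ell$ are nonzero for vectors from $T_\ell$), so the combined family of $\sum_\ell (s_\ell-1)=d+1-k$ vectors is linearly independent.

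For the matching upper bound, observe that every zone vector $\mathbf{e}_i-\mathbf{e}_j$ with $(i,j)\in E(G)$ and $i,j\in C_\ell$ lies in the subspace
\[
W_\ell=\bigl\{\mathbf{x}\in\mathbb{R}^{d+1}:x_m=0\text{ for }m\notin C_\ell,\ \textstyle\sum_{m\in C_\ell}x_m=0\bigr\},
\]
which has dimension $s_\ell-1$. Hence $\lin(V)\subset W_1\oplus\cdots\oplus W_k$, and $\dim\lin(V)\leq\sum_\ell(s_\ell-1)=d+1-k$. Combining the two bounds gives the equality, and therefore $\dim Z_G=d+1-k$.

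I do not expect any serious obstacle here; the only point requiring care is the linear independence of the spanning tree vectors, and this is handled cleanly by the leaf-removal argument described above.
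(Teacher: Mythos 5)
Your proof is correct, but it takes a different route from the paper. The paper argues by induction on the number of edges of $G$: starting from the empty graph (a point, $d+1$ components), it adds edges one at a time and checks that an edge inside a component changes neither the dimension nor the number of components (the new zone vector lies in a cycle, hence in the span of the others), while an edge joining two components raises the dimension by exactly one --- the latter being verified by the same coordinate-sum observation you use, namely that the sum of coordinates over the vertices of one component is $\pm 1$ for the new vector but $0$ for any vector in the span of the old ones. You instead compute $\dim\lin(V)$ directly: a spanning tree of each component $C_\ell$ supplies $|C_\ell|-1$ linearly independent vectors (leaf-removal argument), and all zone vectors of $C_\ell$ lie in the $(|C_\ell|-1)$-dimensional subspace $W_\ell$ of vectors supported on $C_\ell$ with zero coordinate sum, so $\lin(V)=W_1\oplus\cdots\oplus W_k$ has dimension $d+1-k$. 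Your version is slightly more informative, since it identifies the linear span explicitly and exhibits a basis, at the cost of setting up the spanning-tree and subspace bookkeeping; the paper's induction is lighter on notation and fits its later habit of modifying graphs edge by edge. Both hinge on the same elementary coordinate-sum fact, and both are complete.
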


\begin{proof}
We will use an induction on the number of edges of the graph $G.$ If $G$ is an empty graph with $d+1$ vertices then the zonotope $Z_G$ is a point ($0$-dimensional) and $G$ has $d+1$ connected components and the equality $\dim Z_G=d+1-k$ holds. Every time when we add one edge $(i,j)$ there are two possibilities.
\renewcommand{\theenumi}{{\rm \roman{enumi}}}
\begin{enumerate}
\item The new edge is between vertices of one connected component. Then this new edge $(i,j)$ belongs to some cycle in this component and hence the correspondent zone vector $\ib e_{ij}$ is a sum of zone vectors correspondent to other vectors of this cycle. Thus adding the vector $\ib e_{ij}$ to the set of the zone vectors will not change the dimension of the resulting zonotope as well as the number of the connected components will not change.

\item The new edge connects two vertices from different connected components $A$ and $B.$ The dimension of the resulting zonotope can not decrease and can not increase for more than 1. We just need to show that this dimension will not remain the same. Assume that $\ib e_{ij}$ is a linear combination of zone vectors correspondent to other edges of $G.$ So it can be represented as a sum $\ib e_{ij}=\ib e_A+\ib e_B+\ib f$ where $\ib e_A$ and $\ib e_B$ are linear vectors correspondent to edges in components $A$ and $B$ respectively and $\ib f$ is a linear combination of all other vectors. But sum of all coordinates on the places correspondent to vertices in the component $A$ on the left hand side is equal to $\pm 1$ and on the right had side this sum is equal to 0 because it is zero for every vector there, we got a contradiction.
\end{enumerate}
\end{proof}

\begin{lem}\label{proj} Given a $\Pi$-zonotope with graph $G$. Its projection along vector $\ib e_{ij}$ is combintorially equivalent to a $\Pi$-zonotope whose graph $G'$ is obtained from $G$ by gluing vertices $i$ and $j$.
\end{lem}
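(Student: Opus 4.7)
The plan is to compute the projection of $Z_G$ along $\ib e_{ij}$ explicitly and identify its zone vectors with those of $Z_{G'}$. I would first invoke the standard fact that for any zonotope $Z=Z(V)$ and any zone vector $\ib v\in V$, the projection of $Z$ along $\ib v$ equals $Z(\{\pi(\ib u):\ib u\in V\setminus\{\ib v\}\})$, where $\pi$ is the linear projection along $\ib v$; this follows because projection commutes with Minkowski sum and $\pi([\ib 0,\ib v])=\{\ib 0\}$. In our setting $\ib v=\ib e_{ij}$, and none of the remaining zone vectors $\ib e_{kl}$ with $(k,l)\ne(i,j)$ is parallel to $\ib e_{ij}$, so each contributes a nonzero projected zone vector.

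Next, I would identify the quotient space $\mathbb{R}^{d+1}/\langle \ib e_i-\ib e_j\rangle$ with $\mathbb{R}^d$ via the linear isomorphism that sends $\pi(\ib e_k)$ to $\ib e_k$ for every $k\neq j$; this is well defined since $\pi(\ib e_i)=\pi(\ib e_j)$. Under this identification each remaining zone vector $\ib e_k-\ib e_l$ of $Z_G$ maps to $\ib e_{k'}-\ib e_{l'}$, where $k'$ and $l'$ are obtained from $k$ and $l$ by replacing $j$ with $i$ whenever it occurs. This is precisely the zone vector that the edge $(k,l)$ of $G$ gives rise to in $G'$ after gluing $i$ and $j$.

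The step requiring care, which I expect to be the main obstacle, is that an edge of $G'$ incident to the glued vertex may come from either or both of the original edges $(i,l)$ and $(j,l)$ of $G$; when both are present, the projected zonotope picks up two copies of the zone vector $\ib e_i-\ib e_l$, whereas $Z_{G'}$ has only one. To handle this I would use that $[\ib 0,\ib v]+[\ib 0,\ib v]=[\ib 0,2\ib v]$, so merging two copies of a parallel zone vector into one of double length leaves the zonotope unchanged as a set, after which rescaling the single zone vector back to unit length does not alter the combinatorial type. Combining the linear identification, the edge-by-edge matching of zone vectors, and the merging of duplicated directions then yields the desired combinatorial equivalence between the projection of $Z_G$ along $\ib e_{ij}$ and $Z_{G'}$.
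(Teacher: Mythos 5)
Your proposal is correct and follows essentially the same route as the paper: project the zone vectors along $\ib e_{ij}$ (you via the quotient space, the paper via a fixed transversal hyperplane, which is an immaterial difference), match the images with the edges of the glued graph $G'$, and absorb the doubled zone vectors coming from pairs of edges $(i,l)$, $(j,l)$ by a harmless rescaling that preserves the combinatorial type.
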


\begin{proof}
As the first step we remark that it is does not matter on which hyperplane transversal to $\ib e_{ij}$ we projecting because different projections are equivalent under suitable affine transformation, so if initial graph has $k+1$ vertices or the same $Z\subset \mathbb{R}^{k+1}$ we will consider the projection on the plane $\displaystyle x_1+\ldots +x_{k+1}-x_j=0.$ Then any vector $\ib e_{ab}$ will project into
\begin{itemize}
\item $\ib e_{ab}$ if neither $a$ nor $b$ are not equal to $j$;
\item $\ib e_{ib}$ if $a=j$ and $b\neq i$;
\item $\ib e_{ai}$ if $b=j$ and $a\neq i$;
\item $\ib 0$ if $\ib e_{ab}=\pm \ib e_{ij}.$
\end{itemize}
So we will have a set of vectors correspondent to edges of subgraph with $k$ vertices (all beside $j$) but some edges can appear twice if there were both edges $(i,a)$ and $(j,a).$ But if we multiple some zone vector by a non-zero constant (in our case this constant equal to $\frac12$) we will not change the combinatorial type of the zonotope. Also it is easy to see that all changes listed above corresponds to gluing of vertices $i$ and $j$ in the initial graph.
\end{proof}

\begin{cor}
Any $d$-dimensional $\Pi$-zonotope $Z$ is combinatorially equivalent to a $\Pi$-zonotope with correspondent connected graph with $d+1$ vertices obtained from $\Pi_d$.
\end{cor}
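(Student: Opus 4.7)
The plan is to argue by induction on the number $k$ of connected components of $G_Z$. If $G_Z$ has $n+1$ vertices, then Lemma \ref{dimgraph} forces $n+1=d+k$, so in the base case $k=1$ the graph is already connected on exactly $d+1$ vertices and nothing needs to be done.

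For the inductive step, assuming $k\geq 2$, I would pick vertices $i$ and $j$ in two different connected components of $G_Z$ and apply Lemma \ref{proj} to project $Z$ along $\ib e_{ij}$. This produces a $\Pi$-zonotope $Z'$ whose graph $G'$ is obtained from $G_Z$ by gluing $i$ and $j$; hence $G'$ has $k-1$ components and one fewer vertex, and by Lemma \ref{dimgraph} the zonotope $Z'$ is still $d$-dimensional. Applying the inductive hypothesis to $Z'$ then yields a combinatorially equivalent $\Pi$-zonotope with a connected graph on $d+1$ vertices.

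The step I expect to be the main obstacle is verifying that $Z$ itself is combinatorially equivalent to $Z'$. Lemma \ref{proj} only states that $Z'$ is combinatorially equivalent to the projection of $Z$ along $\ib e_{ij}$, and a projection could \emph{a priori} drop the dimension. To rule this out I would repeat the coordinate argument from case~(ii) of the proof of Lemma \ref{dimgraph}: because $i$ and $j$ lie in different connected components, the vector $\ib e_{ij}$ cannot be written as a linear combination of the zone vectors of $Z$, since the sum of coordinates over the vertices of the component containing $i$ equals $1$ on the side of $\ib e_{ij}$ but $0$ on the side of any such combination. Consequently the linear projection along $\ib e_{ij}$ is injective on the affine hull of $Z$, so its image is affinely — and hence combinatorially — equivalent to $Z$. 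Once this linear independence observation is in place, the induction runs without further complications.
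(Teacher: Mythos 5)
Your proposal is correct and follows essentially the same route as the paper: pick $i,j$ in distinct components, project along $\ib e_{ij}$, invoke Lemmas \ref{proj} and \ref{dimgraph}, and iterate. The paper phrases the repetition iteratively and justifies combinatorial equivalence with the terse remark that one ``actually gets a linear transformation of $Z$,'' whereas you make that point explicit by noting $\ib e_{ij}\notin\lin V$ so the projection is injective on the affine hull of $Z$ --- a welcome bit of extra detail, but not a different argument.
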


\begin{proof}
If there are two components in the graph $G_Z$ then we can choose two vertices $i$ and $j$ from different components and project $Z$ along the vector $\ib e_{ij}.$ Due to lemma \ref{proj} the number of components will decrease by 1 and the number of vertices of the graph will decrease by 1 so by lemma \ref{dimgraph} dimension of polytope will not change. So after this operation we will obtain combinatorially equivalent polytope (actually we will get a linear transformation of $Z$). We can do such operation as long as graph $G_Z$ has more than one connected component. At the end we will have connected graph with $d+1$ vertices since the dimension of initial polytope $Z$ is equal to $d$.
\end{proof}

It is well known that any $k$-face of a zonotope $Z(V)$ is again a zonotope $Z(U)$ for some $k$-dimensional subset of zone vectors $U\subset V$ and backwards for any $k$-dimensional subset $U$ of $V$ that cannot be extended with other vectors from $V$ without increasing its dimension there will be a family of parallel $k$-faces of $Z(V)$ equal to $Z(U)$. Now we will describe how to find all faces of a $\Pi$-zonotope with graph $G.$

\begin{defin}
Given graph $G$ with vertex set $A$ and edge set $E$ and a non-empty subset $A'\subset A$. We call the {\it induced subgraph} $G(A')$ the graph with vertex set $A'$ and subset $E'\subset E$ of all edges of $G$ that connects two vertices from $A'$.
\end{defin}

\begin{lem}\label{pi-face}
Given a $\Pi$-zonotope $Z$ with connected graph $G.$ Any face $F$ of codimension $k$ of $Z$ determines a partition of the vertex set of the graph $G$ into $k+1$ non-empty subsets $\mathcal{A}=\{A_0,\ldots,A_k\}$ such that any induced subgraph $G(A_i)$ is connected and in that case $F=Z_{G(\mathcal{A})}$ where $G(\mathcal{A})$ denotes graph $G(A_0)\cup\ldots\cup G(A_k)$. And backwards any partition $\mathcal{A}=\{A_0,\ldots,A_k\}$ with connected induced subgraphs $G(A_i)$ determines a family of parallel faces of $Z$ of codimension $k$ that are equal to $Z_{G(\mathcal A)}.$
 \end{lem}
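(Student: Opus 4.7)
The plan is to exploit the standard description of faces of a zonotope via linear functionals, and then translate it into graph-theoretic language. Recall that for any zonotope $Z(V)$, each face $F$ arises as follows: there is a linear functional $\ell$ such that $F$ is a translate of $Z(U)$ where $U = \{v \in V : \ell(v) = 0\}$ and $\ell$ has constant nonzero sign on $V \setminus U$. For a $\Pi$-zonotope, $\ell$ is determined by its coordinates $\ell_1,\ldots,\ell_{d+1}$, and $\ell(\ib e_{ij}) = \ell_i - \ell_j$, which vanishes precisely when $\ell_i = \ell_j$. Thus $U$ corresponds to the edge set of the subgraph $H \subseteq G$ consisting of edges whose endpoints receive equal $\ell$-values.

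For the forward direction, given $F$ of codimension $k$, I form $H$ as above and let $A_0,\ldots,A_m$ be the connected components of $H$ regarded as a graph on the full vertex set $\{1,\ldots,d+1\}$, so that any vertex isolated in $H$ forms a singleton component. The key claim is that for each $s$ the identity $G(A_s) = H|_{A_s}$ holds, and in particular $G(A_s)$ is connected. The argument: if $i,j \in A_s$ are joined by a path in $H$, consecutive endpoints along the path share an $\ell$-value, so $\ell_i = \ell_j$; then any $G$-edge joining $i$ and $j$ lies in $U$ and hence in $H$. Applying Lemma \ref{dimgraph} to $H$, the dimension of $Z_H = Z(U)$ equals $(d+1)-(m+1)$, and matching this with $\dim F = d-k$ gives $m = k$. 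Since $H$ contains no edges between distinct components, $H = G(\mathcal{A})$ for $\mathcal{A} = \{A_0,\ldots,A_k\}$, so $F$ is a translate of $Z_{G(\mathcal{A})}$.

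For the converse, given a partition $\mathcal{A} = \{A_0,\ldots,A_k\}$ with each $G(A_i)$ connected, I would pick a linear functional $\ell$ assigning pairwise distinct values to the $k+1$ parts. Then $\ell(\ib e_{ij}) = 0$ exactly when $i$ and $j$ lie in the same part, so the set $U$ selected by $\ell$ is precisely the edge set of $G(\mathcal{A})$. The corresponding face of $Z$ is a translate of $Z_{G(\mathcal{A})}$, and its codimension is $k$ by the same dimension count. Varying the assignment of the distinct values (or equivalently the sign pattern of $\ell$ on $V\setminus U$) produces the entire family of parallel faces in this direction.

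The main obstacle is the identity $G(A_s) = H|_{A_s}$ in the forward direction, i.e.\ the observation that the connected-component partition of $H$ already forces $\ell$ to be constant on each part and, conversely, captures every $G$-edge between equal-$\ell$ vertices; without this transitivity, one could imagine a coarser partition whose induced subgraphs fail to be connected. Once the claim is established, the dimension count via Lemma \ref{dimgraph} closes both directions cleanly.
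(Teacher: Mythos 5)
Your argument is correct and follows essentially the same route as the paper: the supporting-functional description is just the dual phrasing of the paper's supporting hyperplanes parallel to the chosen zone vectors, and your key step (that the component partition of the face's edge subgraph already induces full subgraphs of $G$, via transitivity of equal $\ell$-values) is the same idea as the paper's argument that any edge within a component gives a vector parallel to the face, with the identical dimension count through Lemma \ref{dimgraph}. One harmless slip: every face arises from a functional vanishing exactly on $U$, but such a functional need not have constant sign on $V\setminus U$; since you never use that condition, nothing in the proof is affected.
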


\begin{rem}
Compare with \cite[Example 0.10]{Zieg} and the combinatorial description of faces of permutahedron, i.e. the $\Pi$-zonotope with complete graph $G.$ It is easy to see that in the case of permutahedron any (non-ordered) partition of the vertex set of the graph will give us connected subgraphs. In this lemma different faces from one family differs by permutations of sets of partition from description of \cite[Example 0.10]{Zieg}.
\end{rem}

\begin{proof}
Consider a codimension $k$ face $F$ of the $d$-dimensional $\Pi$-zonotope $Z$ with graph $G$ with $d+1$ vertices. Let's draw a new graph $G'$ with edges correspondent to zone vectors of face $F.$ Since $F$ has dimension $d-k$ then $G'$ has $k+1$ connected components, denote vertex set of these components as $A_0,\ldots,A_k.$ It is enough to show that $G'(A_i)=G(A_i).$ Since vertices of $A_i$ are connected by edges of $G'$ then any vector $\ib e_{ab}$ with $a,b\in A_i$ can be represented as a linear combination of some zone vectors from $F$ and then any vector correspondent to some edge of $G(A_i)$ is parallel to $F$ and then $G(A_i)$ is a subgraph of $G'(A_i).$ On the other hand $G'$ is a subgraph of $G$ so $G'=G(A_0)\cup\ldots\cup G(A_k)$ and $F=Z_{G(\mathcal A)}$.

To prove the second statement of this lemma consider an arbitrary partition $\mathcal A=\{A_0, \ldots, A_k\}$ of the vertex set of $G$ such that $G(A_i)$ is connected. Then all vectors correspondent to edges of subgraph $G(\mathcal A)=G(A_0)\cup\ldots\cup G(A_k)$ forms a vector set $T$ of codimension $k.$ Consider a family of hyperplanes $\pi$ in the space of $Z$ such that every hyperplane from $\pi$ is parallel to any vector from $T$ and is not parallel to any other zone vector of $Z.$ Then supporting hyperplane of $Z$ parallel to some plane from $\pi$ determines a face of $Z$ and this face is equal to $Z(T)=Z_{G(\mathcal A)}$ as desired. And any face equal and parallel to $Z_{G(\mathcal A)}$ can be obtained in this way because there is a supporting hyperplane correspondent to this face and this plane is parallel to some face from $\pi$.
\end{proof}

\begin{lem}\label{subface}
For two partitions $\mathcal A_1$ and $\mathcal A_2$ of the vertex set of connected graph $G$ there are two incident faces $F_1$ and $F_2$ of the $\Pi$-zonotope $Z_G$ correspondent to these partitions if and only if one partition is subpartition of another and every set $X$ from any partition $\mathcal{A}_1$ or $\mathcal{A}_2$ induces connected subgraph $G(X)$.
\end{lem}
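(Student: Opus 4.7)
The plan is to use Lemma~\ref{pi-face} as the main tool. That lemma already guarantees that any face of $Z_G$ corresponds to a partition whose blocks induce connected subgraphs, so the connectivity condition in the present statement is automatic from the mere existence of $F_1$ and $F_2$. The substantive content to establish is therefore the equivalence between the incidence of the two faces and the refinement relation between the two partitions.

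For the forward direction, I would assume without loss of generality that $F_2\subseteq F_1$, the case $F_1\subseteq F_2$ being symmetric. By Lemma~\ref{pi-face}, the zone vectors of $F_1$ and $F_2$ are exactly the edges of $G(\mathcal{A}_1)$ and $G(\mathcal{A}_2)$ respectively. Since a face of a zonotope is generated by a subset of its zone vectors, we have $G(\mathcal{A}_2)\subseteq G(\mathcal{A}_1)$ as edge sets. For each block $B$ of $\mathcal{A}_2$, the subgraph $G(B)$ is connected and all its edges lie in $G(\mathcal{A}_1)$, hence in a single block of $\mathcal{A}_1$; this forces the entire vertex set $B$ to sit inside that block (singleton blocks of $\mathcal{A}_2$ are automatic). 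Hence $\mathcal{A}_2$ refines $\mathcal{A}_1$.

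For the reverse direction, suppose $\mathcal{A}_2$ refines $\mathcal{A}_1$ and both satisfy the connectivity condition. Pick any $F_1$ in the parallel family of $\mathcal{A}_1$; up to translation $F_1=Z_{G(\mathcal{A}_1)}$, which decomposes as the Minkowski sum of the zonotopes $Z_{G(A_1^i)}$ over the blocks $A_1^i$ of $\mathcal{A}_1$. Faces of a Minkowski sum of zonotopes are sums of faces of the summands, so it suffices to produce a compatible face in each $Z_{G(A_1^i)}$. Restricting $\mathcal{A}_2$ to $A_1^i$ yields a partition of $A_1^i$ whose parts are themselves blocks of $\mathcal{A}_2$, and hence induce connected subgraphs of the connected graph $G(A_1^i)$. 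Applying Lemma~\ref{pi-face} to each $Z_{G(A_1^i)}$ produces a face realizing this restricted partition, and their Minkowski sum is a face $F_2\subseteq F_1$ whose associated partition is exactly $\mathcal{A}_2$. The only mild obstacle is that Lemma~\ref{pi-face} requires connectivity of the ambient graph, which is precisely why the decomposition along the blocks of $\mathcal{A}_1$ is needed before the lemma can be invoked on each connected summand.
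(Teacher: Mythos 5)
Your forward direction is essentially the paper's own argument: from $F_2\subseteq F_1$ you deduce that the zone vectors of $F_2$ are among those of $F_1$, i.e. $G(\mathcal A_2)\subseteq G(\mathcal A_1)$, and connectivity of the blocks of $\mathcal A_2$ then forces each block into a single block of $\mathcal A_1$; that part is fine and matches the paper.

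The reverse direction has a genuine gap at the step ``their Minkowski sum is a face $F_2\subseteq F_1$''. The fact you invoke --- every face of a Minkowski sum is a sum of faces of the summands --- goes in the wrong direction: you need that the particular faces you selected in the summands $Z_{G(A_1^i)}$ sum to a face of $Z_{G(\mathcal A_1)}$, and for general Minkowski sums this is false; a sum of faces of the summands is a face of the sum only when all of them are exposed by one common linear functional. (Already for $[\ib 0,\ib e_1]+[\ib 0,\ib e_2]+[\ib 0,\ib e_1+\ib e_2]$ the vertices $\ib e_1$, $\ib e_2$, $\ib 0$ of the three segments sum to the centre of the hexagon, not to a face.) You write ``compatible face'' but never verify any compatibility: Lemma \ref{pi-face} only hands you a family of parallel faces in each summand. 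The gap is easy to close in your setting, and the closing argument must appear: since the blocks $A_1^i$ are pairwise disjoint vertex sets, the zonotopes $Z_{G(A_1^i)}$ lie in linearly independent coordinate subspaces, so their Minkowski sum is affinely a Cartesian product of the summands and every choice of faces of the factors is a face of the sum; equivalently, a functional $\ell$ with $\ell(\ib e_a)=c_B$ for all $a$ in a block $B$ of $\mathcal A_2$, the constants $c_B$ pairwise distinct, vanishes exactly on the edges of $G(\mathcal A_2)$ among the zone vectors of $F_1$ and thus exposes the desired face in all summands simultaneously. For comparison, the paper's converse avoids the product decomposition altogether: the edges of the finer partition form a subset of the zone vectors of the coarser face that is closed under taking zone vectors in its linear span (an edge joining two different blocks of the finer partition has coordinate sum $\pm1$ over a block, hence lies outside that span), so the general description of zonotope faces quoted before Lemma \ref{pi-face} directly yields the required face.
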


\begin{proof}
The connectedness of all induced subgraphs immediately follows from the lemma \ref{pi-face}. Consider two faces $F_1$ and $F_2$ of $Z_G$ such that $F_1$ is a face of $F_2.$ There are two partitions $\mathcal{A}_1$ and $\mathcal{A}_2$ of the vertex set of $G$ that correspondent to these faces. Since $F_1\subset F_2$ then any zone vector of $F_1$ is also a zone vector of $F_2$ and then $G(\mathcal A_1)$ is a subgraph of $G(\mathcal A_2)$. Therefore any connected component $G(\mathcal A_1)$ (i.e. set from partition) is a subcomponent of $G(\mathcal A_2)$ and the ``only if'' part is proved.

On the other hand, if we have a subpartition $\mathcal A_1$ of partition $\mathcal A_2$ then any edge of $G(\mathcal A_1)$ (a zone vector of $F_1$) is also an edge of $G(\mathcal A_2)$ (a zone vector of $F_2$). Also any zone vector of $F_2$ that is a linear combination of other zone vectors of $F_1$ is a zone vector of $F_1$ and that finishes the proof.
\end{proof}

\begin{lem}\label{belt}
Given a $\Pi$-zonotope $Z$ with connected graph $G$ and two facets $F$ and $H$ of $Z$ determined by partition of the vertex sets of $G$ into subsets $X_1, X_2$ and $Y_1, Y_2$ respectively. If facets $F$ and $H$ are in the same belt of $Z$ then one of sets $X_1$ or $X_2$ is a subset of $Y_1$ or $Y_2$. And backwards if all sets $X_i\cap Y_j$ induces connected subgraphs and one of these sets is empty then correspondent facets are in one belt.
\end{lem}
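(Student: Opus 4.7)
The plan is to translate the belt condition into a combinatorial condition on partitions of the vertex set, using Lemmas \ref{pi-face} and \ref{subface}. The belt of a codimension $2$ face consists of the facets parallel to, equivalently incident to, that face, so $F$ and $H$ lie in the same belt if and only if there exists a partition $\mathcal{Z} = \{Z_1, Z_2, Z_3\}$ of the vertex set of $G$ into three nonempty subsets inducing connected subgraphs such that $\mathcal{Z}$ is a common refinement of the bipartitions $\{X_1, X_2\}$ and $\{Y_1, Y_2\}$. Once this reformulation is in place both directions become short counting arguments.

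For the ``only if'' direction I would exploit the fact that each of $X_1, X_2$ is a union of $Z_i$'s and there are three $Z_i$'s distributed between only two $X$'s: by pigeonhole one of $X_1, X_2$ equals a single $Z_a$, and the same holds for the $Y$'s with some $Z_b$. If $a=b$ then this $X_i$ equals the corresponding $Y_j$; if $a\neq b$, then $Z_a$ is contained in the union of the two $Z$'s different from $Z_b$, which is precisely the other part of the $Y$-bipartition. Either way one $X_i$ is a subset of one $Y_j$.

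For the ``if'' direction, after relabeling I may assume the empty intersection is $X_1 \cap Y_1$, so that $X_1 \subseteq Y_2$ and $Y_1 \subseteq X_2$. The candidate codimension $2$ face comes from the partition $\{X_1 \cap Y_2,\ X_2 \cap Y_1,\ X_2 \cap Y_2\} = \{X_1,\ Y_1,\ X_2 \cap Y_2\}$: all three parts are connected by hypothesis, and the partition refines both $\{X_1, X_2\}$ (via $X_2 = Y_1 \cup (X_2 \cap Y_2)$) and $\{Y_1, Y_2\}$ (via $Y_2 = X_1 \cup (X_2 \cap Y_2)$). Lemmas \ref{pi-face} and \ref{subface} then produce a codimension $2$ face of $Z$ incident to both $F$ and $H$.

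The step I expect to require the most care is the degenerate case of the converse: if $X_2 \cap Y_2$ is also empty then the candidate ``tripartition'' collapses to two parts and the Lemma \ref{pi-face} construction fails. This occurs exactly when $\{X_1, X_2\} = \{Y_1, Y_2\}$, that is, when $F$ and $H$ are equal or opposite facets, and must be handled separately by the trivial observation that opposite facets lie in every belt containing either one. Verifying that this degeneration is the only obstruction, and that in every remaining case the three candidate sets are genuinely nonempty, disjoint and cover the vertex set, is really the only non-routine part of the argument.
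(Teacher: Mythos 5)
Your proof is correct and takes essentially the same route as the paper: both directions reduce to counting the nonempty cells $X_i\cap Y_j$ against the three parts of a codimension-$2$ refinement, via Lemmas \ref{pi-face} and \ref{subface}. Your explicit treatment of the degenerate case where $X_2\cap Y_2$ is also empty (forcing $\{X_1,X_2\}=\{Y_1,Y_2\}$, i.e.\ $F$ and $H$ are the same or opposite facets) is a small point the paper passes over silently, and it is correctly resolved.
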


\begin{proof}
Assume that facets $F$ and $H$ are in the same belt of $Z$ then due to lemma \ref{subface} for partitions $\{X_1,X_2\}$ and $\{Y_1,Y_2\}$ must exist a join subpartition into three sets. Then this subpartition must contain four sets $X_i\cap Y_j$ and this is possible only if one of these sets is empty. Without loss of generality we can assume that $X_1\cap Y_1=\emptyset$ and then $X_1$ is a subset of $Y_2.$

On the other hand if all 4 sets $X_i\cap Y_j$ induces connected subgraphs and one of these sets is empty then this partition generates a family of faces of codimension 2 of $Z$. And some faces from this family due to lemma \ref{subface} belongs to both pairs of opposite facets generated by partitions $\{X_1,X_2\}$ and $\{Y_1,Y_2\}.$
\end{proof}

\section{Symmetric $\Pi$-zonotopes and their representation}

\begin{defin}\label{symdef}
Consider two $(d-1)$-dimensional vector sets $V_1$ and $V_2$ of $d-1$ vectors each in $\mathbb{R}^d$. We will call these two sets {\it conjugate} if for any vectors $\ib e_1\in V_1$ and $\ib e_2\in V_2$ we have $\dim \{\ib e_1\cup V_2\}=\dim \{e_2\cup V_1\}=d.$ The zonotope $Z(V_1\cup V_2)$ is called {\it symmetric} zonotope.
\end{defin}

The notion of symmetric zonotopes is useful for finding maximal belt diameters of $d$-dimensional zonotopes. This result is proved in \cite[Cor. 4.3]{Gar}.

\begin{lem}[\cite{Gar}]
If $\xi(d)$ is the maximal belt diameter of $d$-dimensional space filling symmetric zonotope then belt diameter of any $d$-dimensional space filling zonotope is not greater than $\max\limits_{1\leq i\leq d}\xi(i)$.
\end{lem}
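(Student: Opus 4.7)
My plan is to show, for any two pairs $F,H$ of opposite facets of a $d$-dimensional space filling zonotope $Z$, that the belt distance between them in $Z$ is bounded above by $\xi(i)$ for some $i\leq d$. I would do this by associating to the pair $(F,H)$ a symmetric space filling sub-zonotope of dimension $i$ in which $F$ and $H$ correspond to facets whose belt distance dominates the belt distance in $Z$.

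First, let $V_F$ and $V_H$ be the sets of zone vectors of $Z$ parallel to $F$ and $H$, so $\dim\lin V_F=\dim\lin V_H=d-1$. If $\lin V_F=\lin V_H$ then $F$ and $H$ represent the same pair of opposite facets and the belt distance is $0$, so one may assume $\dim\lin(V_F\cup V_H)=d$. Next I would extract $(d-1)$-element bases $\widetilde V_F\subset V_F$ and $\widetilde V_H\subset V_H$, chosen so that no vector of one set lies in the linear span of the other; this is exactly the conjugacy requirement of Definition~\ref{symdef}, so $Z(\widetilde V_F\cup\widetilde V_H)$ becomes a $d$-dimensional symmetric zonotope. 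I would then verify, via McMullen's criterion (Theorem~\ref{2or3}), that this zonotope remains space filling: every $(d-2)$-dimensional subset of $\widetilde V_F\cup\widetilde V_H$ can be completed inside $V$ to a $(d-2)$-dimensional subset of the zone vectors of $Z$, and the 2-or-3 direction condition on the larger projection restricts to the smaller one.

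The final step is to lift belt paths: if two facets of the symmetric sub-zonotope are belt-adjacent via some codimension-$2$ face, then restoring the omitted zone vectors of $V$ produces a codimension-$2$ face of $Z$ whose belt connects the corresponding facets of $Z$, so the belt distance in $Z$ is bounded by the belt distance in $Z(\widetilde V_F\cup\widetilde V_H)$, hence by $\xi(d)$. I expect the main obstacle to be the conjugacy step: ensuring that bases $\widetilde V_F,\widetilde V_H$ satisfying Definition~\ref{symdef} always exist, even in degenerate configurations where many candidate vectors accidentally fall into the opposite hyperplane. If a direct choice fails at dimension $d$, one would project $Z$ along a suitable common direction (which by analogues of Lemma~\ref{proj} yields a space filling zonotope of smaller dimension preserving the belt distance between the images of $F$ and $H$) and induct on $d$; this descent is exactly why the bound in the statement is $\max_{1\leq i\leq d}\xi(i)$ rather than $\xi(d)$ alone.
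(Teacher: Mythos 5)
Your proposal is correct and follows essentially the same route as the paper's reduction (Lemma~\ref{max} and its source \cite{Gar}): restrict each facet to a $(d-1)$-element conjugate subset of its zone vectors to obtain a symmetric space-filling sub-zonotope, note that belt paths there lift to belt paths in $Z$ (so the relevant belt distance only goes up in the sub-zonotope), and handle the degenerate case by projecting along a common zone vector and inducting on dimension. The only point worth making explicit is the one you leave implicit: the direct choice of conjugate bases fails precisely when some vector of $V_F$ lies in $\lin V_H$ (or vice versa), and such a vector is automatically a common zone vector of both facets, so the projection fallback is always available exactly when needed.
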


Here we will point how to proof the same lemma for $\Pi$-zonotopes.

\begin{lem}\label{max}
If $\xi_\Pi(d)$ is the maximal belt diameter of $d$-dimensional symmetric $\Pi$-zonotopes then belt diameter of any $d$-dimensional $\Pi$-zonotope is not greater than $\max\limits_{1\leq i\leq d}\xi_\Pi(i)$. Moreover this maximum will be achieved on conjugated facets of symmetric $\Pi$-zonotope.
{\sloppy

}
\end{lem}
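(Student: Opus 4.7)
Proof plan. The plan is to adapt to the graph-theoretic framework the reduction used in \cite{Gar} for general space-filling zonotopes. Let $Z$ be a $d$-dimensional $\Pi$-zonotope with connected graph $G$ and let $F, H$ be two facets realising its belt diameter $D$; by lemma \ref{pi-face} they correspond to partitions $\{X_1, X_2\}$ and $\{Y_1, Y_2\}$ of the vertex set of $G$. I will apply a sequence of operations, each of which produces a $\Pi$-zonotope of no greater dimension and does not shorten the belt distance between the tracked pair of facets, eventually arriving at a symmetric $\Pi$-zonotope with a pair of conjugate facets at distance at least $D$.

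The first operation is deletion of every edge of $G$ that is not a zone vector of $F$ or $H$ --- equivalently, every edge simultaneously crossing $\{X_1, X_2\}$ and $\{Y_1, Y_2\}$. Since $\dim(V_F \cup V_H)=d$ (otherwise $F$ and $H$ would be parallel and already at distance $0$), the resulting $\Pi$-zonotope $Z_1$ is still $d$-dimensional and still has $F, H$ as facets, and because its belts form a subfamily of those of $Z$ the distance can only grow. The second operation deals with common zone vectors of $F$ and $H$, which are precisely the edges with both endpoints in a single intersection $X_a\cap Y_b$. By lemma \ref{proj}, projecting along such an edge corresponds to identifying the two endpoints in the graph; the partitions $\{X_1,X_2\}$ and $\{Y_1,Y_2\}$ descend canonically so that $F$ and $H$ project to facets, and (via lemma \ref{subface}) any codimension-two face of the projected zonotope lifts to a codimension-two face of $Z_1$ in which the identified vertices lie in a single block. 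Consequently every belt of the projection is a belt of $Z_1$ and every belt path between the images of $F$ and $H$ lifts to a belt path of the same length between $F$ and $H$ in $Z_1$. Iterate these projections until no intersection $X_a\cap Y_b$ contains an edge, and --- if needed --- delete edges lying in cycles of the surviving induced subgraphs $G(X_a)$ and $G(Y_b)$ to turn each of them into a spanning tree; this last step is again a deletion that cannot shrink the belt distance between the two tracked facets. The final $\Pi$-zonotope $Z^\ast$ has a graph whose edge set is the disjoint union of the zone vectors of the image of $F$ (edges inside some $X_a$ crossing $\{Y_1,Y_2\}$) and those of the image of $H$ (edges inside some $Y_b$ crossing $\{X_1,X_2\}$), each a union of two trees totalling $\dim Z^\ast-1$ vectors; the conjugation condition of definition \ref{symdef} is then immediate from the bipartite structure. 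Hence $Z^\ast$ is a symmetric $\Pi$-zonotope with the images of $F$ and $H$ as conjugate facets, and the chain $D\leq\text{dist}_{Z^\ast}(F,H)\leq \xi_\Pi(\dim Z^\ast)\leq \max_{1\leq i\leq d}\xi_\Pi(i)$ proves both the bound and the extremality statement.

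The main obstacle is the lifting argument for the projection step: one must argue rigorously that any shortcut belt path in the contracted zonotope corresponds to a belt path of the same length in $Z_1$, which amounts to a careful bookkeeping of three-block connected partitions of the vertex set under edge contraction. The deletion steps, by contrast, are routine monotonicity observations, since a belt can only disappear when one of its generating zone vectors is removed.
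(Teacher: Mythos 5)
Your proposal is correct and takes essentially the same route as the paper: delete the zone vectors not needed for the two tracked facets and project (contract) along their common zone vectors, observing that neither operation decreases the belt distance, until one arrives at a symmetric $\Pi$-zonotope of dimension at most $d$ in which the images of the two facets are conjugate. Casting the operations as edge deletions and contractions via lemma \ref{proj} is just a graph-theoretic restatement of the paper's argument, and the projection-lifting claim you flag as the main obstacle is asserted in the paper at essentially the same level of detail.
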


\begin{rem} If we decline any restrictions in this or previous lemma then the statement can be formulated as follows. The maximal belt diameter of $d$-dimensional zonotope is not greater $\max\limits_{1\leq i\leq d}\xi'(i)$ where $\xi'(i)$ denotes the maximal belt diameter of $i$-dimensional symmetric zonotope. 

But this result can be obtained straightforward since for any two facets $Z(E)$ and $Z(F)$ of zonotope $Z(V)$ we can find belt path of length at most $d-1$ between them. To do that assume that set $E$ contains linearly independent vectors $\ib e_i, 1\leq i\leq d-1$ and $F$ contains linearly independent vectors $\ib f_i, 1\leq i\leq d-1$, also these sets can contain other vectors too. Then all $(d-1)$-dimensional sets $EF_i=\{\ib f_1,\ldots, \ib f_i,\ib e_{i+1},\ldots, \ib e_{d-1}\}$ gives us a belt path between facets $Z(E)$ and $Z(F)$ of length at most $d-1$ since $EF_0=E$ and $EF_{d-1}=F$ and each time when we get a $(d-1)$-dimensional set $EF_i$ we make a step using exactly one belt from the previuos one.

So we got an upper bound for maximal belt diameter of arbitrary $d$-dimensional zonotope and an example of $d$-dimensional zonotope with belt diameter $d-1$ can be obtained from two $(d-1)$-dimensional sets $E$ and $F$ in general position, i.e. we can take any $2d-2$ vectors such that any $d$ of them are linearly independent (so $\xi'(i)=i-1$).
\end{rem}

\begin{proof}
The idea of proof is to show that for any two facets $F_1$ and $F_2$ of a given $\Pi$-zonotope $Z$ with belt distance $k$ there exist a symmetric $\Pi$-zonotope $Z(V_1\cup V_2)$ of dimension at most $d$ with belt distance between facets correspondent to sets $V_1$ and $V_2$ at least $k.$

If facets $F_1$ and $F_2$ has a common zone vector $\ib e$ then we can project $Z$ along $\ib e$ and this will not decrease belt distance, i.e. belt distance between projections of $F_1$ and $F_2$ will be not smaller than belt distance between initial faces because any belt path on the resulted zonotope is a projection of belt path on initial zonotope of the same length. Also after projecting we will obtain a $\Pi$-zonotope as we show in lemma \ref{proj}.

Otherwise we can remove any zone vector from the face $F_1$ until we can do it without decreasing its dimension. And this operation again will not decrease the belt distance between $F_1$ and $F_2$ since for any belt path in the resulting zonotope will be a belt path in the initial zonotope with the same generating vector sets and the zonotope $Z$ will remain a $\Pi$-zonotope. The same operation we can do with another facet $F_2$ or with any zone vector not from $F_1$ and $F_2$ until both facets will contain exactly $d-1$ linearly independent vectors and there will be no other zone vectors in $Z.$ In that case sets of zone vectors of $F_1$ and $F_2$ are conjugated and $Z$ is a symmetric zonotope $Z(F_1\cup F_2)$.
\end{proof}

Now we will describe several properties of graph representation of $d$-dimensional symmetric $\Pi$-zonotopes with connected graphs on $d+1$ vertices. Consider a symmetric $\Pi$-zonotope $Z(V_R\cup V_B)$ and its graph $G.$ Edges of $G$ can be colored in red or blue color whether they correspondent to zone vectors of $V_R$ or $V_B$ respectively, we will call these facets red and blue. Also we call correspondent red and blue subgraphs of $G_Z$ on $d+1$ vertices $G_R(Z)$ and $G_B(Z)$ respectively.

In the following text we will use several well-known notions from graph theory like {\it tree}, {\it forest} and {\it leaf}. Detailed definitions and properties can be found in \cite{Dies}.

It is well known that every tree with $n$ vertices has $n-1$ edges and every forest with $k$ trees and $n$ vertices has $n-k$ edges. Moreover if connected graph with $n$ vertices has $n-1$ edges then it is a tree and if graph with $n$ vertices and $k$ connected components has $n-k$ edges then it is a forest with $k$ trees.

\begin{lem}\label{trees}
Both graphs $G_R(Z)$ and $G_B(Z)$ are forests with two components and  for any blue edge $\ib e_b$ and any red edge $\ib e_r$ graphs $G_R\cup\{\ib e_b\}$ and $G_B\cup\{\ib e_r\}$ are trees, so every blue edge connects two different red components and every red edge connects different blue components.
\end{lem}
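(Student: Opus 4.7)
The plan is to bootstrap everything from Lemma \ref{dimgraph} applied to the monochromatic and near-monochromatic subgraphs, combined with the standard graph-theoretic fact that a graph on $n$ vertices with $k$ connected components and exactly $n-k$ edges is a forest with $k$ trees (and is a single tree when $k=1$).

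First I would unpack Definition \ref{symdef}: both $V_R$ and $V_B$ are $(d-1)$-dimensional and contain exactly $d-1$ vectors, so $G_R(Z)$ and $G_B(Z)$ each have $d+1$ vertices and $d-1$ edges. Applying Lemma \ref{dimgraph} to $G_R$ gives $d-1 = \dim Z_{G_R} = (d+1) - k_R$, so the number of connected components is $k_R = 2$, and the edge count $d-1 = (d+1) - 2$ then forces $G_R$ to be a forest with two trees by the characterization above. A symmetric argument gives the same for $G_B$.

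For the second assertion, fix any blue edge $\ib e_b \in V_B$. By conjugacy $\dim\bigl(\{\ib e_b\}\cup V_R\bigr) = d$, so Lemma \ref{dimgraph} forces the graph $G_R \cup \{\ib e_b\}$ (which has $d+1$ vertices and $d$ edges) to have exactly $(d+1) - d = 1$ connected component. A connected graph on $n$ vertices with $n-1$ edges is a tree, so $G_R \cup \{\ib e_b\}$ is a tree. The same argument with colors swapped handles $G_B \cup \{\ib e_r\}$ for any red edge.

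The final "colors join components" statement is then immediate: if $\ib e_b$ had both endpoints in a single component of $G_R$, adding it would create a cycle while preserving the number of components at $2$, contradicting the tree conclusion of the previous paragraph; so $\ib e_b$ must bridge the two red components, and symmetrically for red edges. The only subtlety I anticipate is ensuring that the definition of conjugacy really delivers $|V_R|=|V_B|=d-1$ together with $\dim V_R=\dim V_B=d-1$; once that is in hand the whole lemma is essentially edge-counting against Lemma \ref{dimgraph}.
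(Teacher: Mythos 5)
Your proposal is correct and follows essentially the same route as the paper: both count vertices and edges of the monochromatic subgraphs, use the dimension--component relation (Lemma \ref{dimgraph}) together with the conjugacy condition of Definition \ref{symdef} to get two components for $G_R$, $G_B$ and one component after adding an edge of the other color, and then invoke the standard forest/tree edge-count characterization. Your write-up merely makes explicit the appeal to Lemma \ref{dimgraph} that the paper leaves implicit, and the "subtlety" you flag is settled directly by Definition \ref{symdef}, which stipulates that $V_R$ and $V_B$ are $(d-1)$-dimensional sets of $d-1$ vectors each.
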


\begin{proof}
It is enough to note that blue graph $G_B$ has $d-1$ edges and $d+1$ vertices and since it is correspondent to $(d-1)$-dimensional set of zone vectors $V_B$ it has two connected components, so $G_B$ is a forest with two trees and by the same reason $G_R$ is also a forest with two trees. Moreover, due to definition \ref{symdef} if we add a red edge $\ib e_r$ to blue subgraph then it will determine $d$-dimensional set of zone vectors so this graph $G_B\cup\{\ib e_r\}$ must have only one connected component, thus it must be a tree and $\ib e_r$ must connect two different blue components. The same is true for every blue edge.
\end{proof}

\begin{cor}\label{oneedge}
There are no cycles in $G_Z$ that contain exactly one red or exactly one blue edge.
\end{cor}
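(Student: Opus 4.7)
The plan is to derive the corollary directly from Lemma \ref{trees} by a short contradiction argument, handling the two cases symmetrically.

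First I would suppose, for contradiction, that $G_Z$ contains a cycle $C$ with exactly one blue edge $\ib e_b$ and all remaining edges red. Deleting $\ib e_b$ from $C$ leaves a path of red edges whose endpoints are precisely the two endpoints of $\ib e_b$. In particular, the two endpoints of $\ib e_b$ lie in the same connected component of the red subgraph $G_R(Z)$. However, Lemma \ref{trees} asserts that every blue edge joins two \emph{different} connected components of $G_R(Z)$ (equivalently, that $G_R(Z)\cup\{\ib e_b\}$ is a tree, hence acyclic before the addition of $\ib e_b$ on the red side). This is a contradiction.

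The case of a cycle containing exactly one red edge $\ib e_r$ and all other edges blue is identical after swapping the colors: deleting $\ib e_r$ would produce a blue path between the endpoints of $\ib e_r$, placing them in the same component of $G_B(Z)$, which contradicts the corresponding statement of Lemma \ref{trees} that every red edge connects different blue components.

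There is essentially no obstacle here; the only thing to verify cleanly is the translation ``cycle with one $\ib e_b$ removed $=$ red path between its endpoints,'' which is immediate from the definition of a cycle. Since both cases reduce to a single application of Lemma \ref{trees}, the proof is a couple of lines.
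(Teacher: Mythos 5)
Your argument is correct and matches the paper's intent: the corollary is stated there as an immediate consequence of Lemma \ref{trees}, and the natural justification is exactly your contradiction — a cycle with a single blue edge yields a red path joining that edge's endpoints, so the edge would connect vertices in the same red component, contradicting the lemma (symmetrically for a single red edge). Nothing is missing.
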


\begin{cor}\label{even}
Two vertices in one red (blue) component are in the same blue (red) component if and only the distance between these vertices in the red (blue) subgraph is even. Therefore the graph $G_Z$ is bipartite.
\end{cor}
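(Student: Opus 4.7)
The plan is to prove the if-and-only-if statement first and then deduce bipartiteness from a closely related construction; both arguments rest on Lemma~\ref{trees}. First I would fix two vertices $u,v$ in a common red component and consider the unique red path $u=u_0,u_1,\ldots,u_\ell=v$ (unique because $G_R(Z)$ is a forest). By Lemma~\ref{trees} each red edge $u_iu_{i+1}$ joins two different blue components, and since only two blue components are available, the blue-component index of $u_i$ must alternate along this path. That immediately gives: $u=u_0$ and $v=u_\ell$ lie in the same blue component if and only if $\ell$ is even. Swapping the roles of the two colors gives the parenthesized version.

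For the bipartiteness claim, I would build an explicit proper $2$-coloring from the component data. Denote the two red components by $R_1,R_2$ and the two blue components by $B_1,B_2$, and set $a(v)\in\{0,1\}$ according as $v\in R_1$ or $v\in R_2$, and similarly $b(v)\in\{0,1\}$ on the blue side. Take $f(v)=a(v)+b(v)\pmod 2$ as the candidate colouring. On a red edge both endpoints share a red component, so $a$ is constant, while they lie in different blue components by Lemma~\ref{trees}, so $b$ flips; hence $f$ flips. On a blue edge the situation is symmetric, so again $f$ flips. Consequently $f$ is a proper $2$-colouring and $G_Z$ is bipartite.

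The only real input is Lemma~\ref{trees} (specifically the ``two components, and edges of the other colour cross between them'' part), and I do not expect any serious obstacle: once one knows that each coloured subgraph is a two-tree forest whose components are interchanged by edges of the opposite colour, both halves collapse to a one-line parity check. The step most worth stating explicitly is the alternation of blue-component labels along a red path, as it is the mechanism responsible for both the iff statement and, via the $a+b$ construction, the bipartiteness conclusion.
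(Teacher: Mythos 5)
Your proof is correct, and the two halves deserve separate comments. The if-and-only-if part coincides with the paper's argument: you use the uniqueness of the red path in a tree and the fact (Lemma~\ref{trees}) that every red edge crosses between the two blue components, so the blue label alternates along the path. For bipartiteness you take a genuinely different route. The paper argues via cycles: since every red edge changes the blue component and every blue edge changes the red component, traversing any cycle forces the number of red edges and the number of blue edges to each be even, so all cycles have even length and $G_Z$ is bipartite by the odd-cycle criterion. You instead exhibit an explicit proper $2$-colouring $f(v)=a(v)+b(v)\pmod 2$ built from the red- and blue-component indicators, checking that $f$ flips across every edge because exactly one of $a,b$ flips. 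Both arguments rest solely on Lemma~\ref{trees}. Your version is more constructive, as it names the bipartition classes explicitly, namely $(R_1\cap B_1)\cup(R_2\cap B_2)$ against $(R_1\cap B_2)\cup(R_2\cap B_1)$, and avoids invoking the characterization of bipartite graphs by absence of odd cycles; the paper's cycle count, on the other hand, records the slightly finer fact that the red and blue edge counts of every cycle are individually even, which is in the same spirit as Corollary~\ref{oneedge}.
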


\begin{proof}
We need only to mention that due to lemma \ref{trees} if we go from one vertex to another by red edge then we change a blue component and in a tree there is a unique path between any two vertices.

To proove the second assertion of this corollary it is enough to mention that for any cycle numbers of blue and red edges in it has the same parity since any blue edgse changes red component and any red edge changes blue component and in the end of the cycle we need to come to the same blue and the same red component. Hence any cycle in $G_Z$ has even length.
\end{proof}

We will denote sets of vertices of red and blue trees as $R_1$, $R_2$ and $B_1$, $B_2$ respectively.

\begin{lem}\label{sinvert}
If one of red components $R_1$ is a single vertex $r$ then blue subgraph also contains an isolated vertex $b$ and all red edges connect vertex $b$ with all other $d-1$ vertices of $G_Z$ except $r$ and all blue edges connects $r$ with all other vertices except $b,$ i.e. the graph $G_Z$ is the complete bipartite graph $K_{2,d-1}$.
\end{lem}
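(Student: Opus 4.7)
The plan is to exploit Lemma \ref{trees} symmetrically, first in the blue direction to pin down $b$ and then in the red direction to pin down all remaining edges, using only edge-counting in the forests $G_R(Z)$ and $G_B(Z)$. The key observation is that the degrees are essentially forced once one red component is a singleton.

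First I would use the fact that every blue edge connects two different red components. Since $R_1 = \{r\}$, every blue edge must have $r$ as one of its endpoints. The blue subgraph $G_B(Z)$ has exactly $d-1$ edges, so $r$ must have blue-degree exactly $d-1$, and the other endpoints must be $d-1$ distinct vertices of $R_2$. As $|R_2| = d$, this leaves exactly one vertex $b \in R_2$ which is not incident to any blue edge; hence $\{b\}$ forms an isolated component of $G_B(Z)$, and the other blue component is $R_2 \setminus \{b\} \cup \{r\}$ (a star centered at $r$).

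Next I would repeat the same argument with colors swapped: every red edge connects two different blue components, and since $B_1 := \{b\}$ is a singleton blue component, every red edge is incident to $b$. Now there are $d-1$ red edges, and their other endpoints must lie in the other blue component $B_2$, which has $d$ vertices. However, $r$ is isolated in the red subgraph (since $R_1 = \{r\}$), so $r$ cannot be an endpoint of a red edge. This leaves exactly $d-1$ candidate vertices in $B_2 \setminus \{r\}$ to serve as the other endpoints, and so $b$ must be joined by red to each of them. The edge set of $G_Z$ therefore consists precisely of the blue edges from $r$ to every vertex outside $\{r,b\}$ and the red edges from $b$ to every vertex outside $\{r,b\}$, which is exactly $K_{2,d-1}$ with parts $\{r,b\}$ and the remaining $d-1$ vertices.

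There is no real obstacle here: the argument is a short double-counting in the two forests, and the only thing to be careful about is invoking Lemma \ref{trees} in both directions (blue edges reveal $b$, red edges then reveal the full structure). The symmetry between the two colors is what makes the singleton hypothesis on $R_1$ automatically propagate to a singleton in the blue forest.
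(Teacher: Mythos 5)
Your proof is correct and follows essentially the same argument as the paper: every blue edge must meet the singleton red component $\{r\}$, which forces a blue-isolated vertex $b$, and then the symmetric argument with colors swapped forces all red edges to join $b$ to the remaining $d-1$ vertices, yielding $K_{2,d-1}$.
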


\begin{proof}
Since every blue edge connects two different red component then any of $d-1$ blue edges has $r$ as a vertex. Then blue tree that contains $r$ also contains $d-1$ other vertices of $G_Z$ and another blue component has exactly one vertex, denote isolated vertex of blue subgraph as $b$. By the same reason any red edge is incident to the vertex $b$. But there is no red edge $rb$ since $r$ is an isolated vertex in the red subgraph and then all red edges connects $b$ with all vertices except $r$ as it is shown on the following picture.
\begin{figure}[!ht]
\begin{center}
\includegraphics[height=3cm]{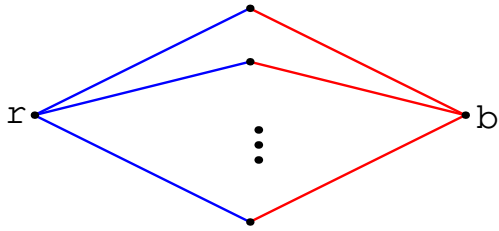}
\caption{Graph of symmetric $\Pi$-zonotope with isolated vertex as red component.}
\end{center}
\end{figure}
\end{proof}

\section{Belt diameters of $\Pi$-zonotopes}

\begin{lem}\label{sinvertdist}
Under condition of the lemma $\ref{sinvert}$ the belt distance between red and blue facets of the zonotope $Z$ is equal to $2$.
\end{lem}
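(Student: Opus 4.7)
The plan is to identify the partitions corresponding to the red facet $F_R$ and the blue facet $F_B$ via Lemma \ref{pi-face}, rule out belt distance $1$ using Lemma \ref{belt}, and then exhibit an intermediate facet via another application of Lemma \ref{belt}.

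First, by Lemma \ref{sinvert} the graph $G_Z$ equals $K_{2,d-1}$ with bipartition $\{r,b\}\sqcup W$, where $W$ denotes the set of the remaining $d-1$ vertices; the vertex $b$ is joined to every element of $W$ by a red edge and $r$ by a blue edge. Applying Lemma \ref{pi-face}, the red facet $F_R$ corresponds to the partition $\mathcal{P}_R=\{\{r\},V\setminus\{r\}\}$ (the two parts extract precisely the red edges, i.e.\ the star centered at $b$), and the blue facet $F_B$ corresponds to $\mathcal{P}_B=\{\{b\},V\setminus\{b\}\}$; in both partitions every part induces a connected subgraph of $G_Z$.

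For the lower bound I apply Lemma \ref{belt} to $\mathcal{P}_R$ and $\mathcal{P}_B$. The four pairwise intersections are $\emptyset$, $\{r\}$, $\{b\}$ and $W$. Since the only edges of $G_Z$ incident to $W$ connect $W$ to $\{r,b\}$, the induced subgraph $G_Z(W)$ is edgeless; for $d\geq 3$ we have $|W|\geq 2$, hence $G_Z(W)$ is disconnected. This violates the hypothesis of Lemma \ref{belt}, so $F_R$ and $F_B$ do not share a belt and the belt distance is at least $2$.

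For the upper bound, fix any $w\in W$ and consider the partition $\mathcal{P}=\{\{w\},V\setminus\{w\}\}$. Because $d\geq 3$, the set $W\setminus\{w\}$ is non-empty, and every element of $W\setminus\{w\}$ is adjacent to both $r$ and $b$ in $G_Z$; thus $G_Z(V\setminus\{w\})$ is connected and by Lemma \ref{pi-face} the partition $\mathcal{P}$ defines a genuine facet $F$ of $Z$. Comparing $\mathcal{P}$ with $\mathcal{P}_R$, the four intersections are $\emptyset$, $\{r\}$, $\{w\}$ and $\{b\}\cup(W\setminus\{w\})$; the last is a star centered at $b$, hence connected. By Lemma \ref{belt}, $F$ and $F_R$ lie in a common belt. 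The same argument with $r$ and $b$ interchanged places $F$ and $F_B$ in a common belt. Combining the two bounds yields belt distance exactly $2$.

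The main issue is purely conceptual -- picking the right intermediate partition. A naive attempt such as $\{\{r,b\},W\}$ fails immediately since $r$ and $b$ are non-adjacent in $K_{2,d-1}$, so the part $\{r,b\}$ would violate the connectedness clause of Lemma \ref{pi-face}; isolating a single vertex of $W$ is the shortest way to reconnect $r$ and $b$ via the surviving bipartite structure.
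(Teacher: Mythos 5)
Your proof is correct and follows essentially the same route as the paper: the same intermediate facet obtained by isolating a single vertex $w\notin\{r,b\}$, and the same lower-bound argument that the three nonempty intersections $\{r\}$, $\{b\}$, $W$ would have to induce connected subgraphs while $G_Z(W)$ is edgeless with $|W|\geq 2$. The only cosmetic point is that ``violates the hypothesis of Lemma~\ref{belt}'' really rests on the fact (implicit in the proof of Lemma~\ref{belt} via Lemmas~\ref{subface} and~\ref{pi-face}) that a common belt forces the intersection sets themselves to be connected, which is exactly the reading the paper's own proof uses.
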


\begin{proof}
Consider a facet $F$ of $Z$ determined by partition of the vertex set of $G_Z$ into two sets. The first set contains $d$ vertices including $r$ and $b$ and the second set contains the single remaining vertex. By lemma \ref{belt} this facet $F$ is in one belt with facet $V_R$ and in one belt with facet $V_B$ and correspondent belt distance between $V_B$ and $V_R$ is not greater than 2. We showed correspondent facets (partitions into two sets) on the next picture, correspondent partitions illustrated by dashed lines.

\begin{figure}[!ht]
\begin{center}
\parbox[c]{4.72cm}{\includegraphics[height=2.5cm]{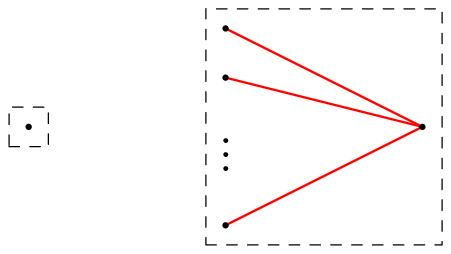}}$\longleftrightarrow$
\parbox[c]{4.72cm}{\includegraphics[height=2.5cm]{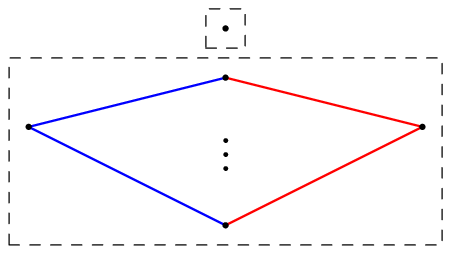}}$\longleftrightarrow$
\parbox[c]{4.72cm}{\includegraphics[height=2.5cm]{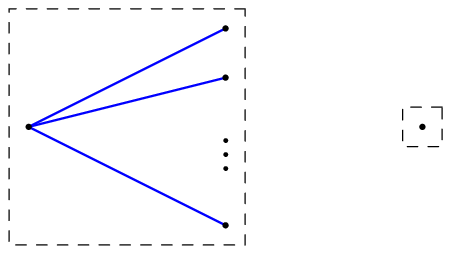}}
\caption{Belt path between facets $V_R$ on the left and $V_B$ on the right through the facet $F$.}
\end{center}
\end{figure}

We still need to show that red and blue facets are not in one belt. Assume converse and apply lemma \ref{belt} to partitions correspondent to these facets. Then three intersection sets must induce connected subgraphs but  this statement is false because these three sets are $\{r\}$, $\{b\}$ and all remaining $(d-1)$ vertices. We got a contradiction and that means that belt distance between red and blue facets is 2.
\end{proof}

\begin{lem}\label{leaves}
The belt distance between red and facets in $\Pi$-zonotope $Z(V_R\cup V_B)$ is equal to $2$ if and only if there is a vertex $A$ that is a leaf in both red and blue subgraphs $G_R$ and $G_B.$
\end{lem}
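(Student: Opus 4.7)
I would prove the equivalence in two directions. For the \emph{if} direction, given a common leaf $A\in R_i\cap B_j$, I propose the facet $F$ corresponding to the singleton partition $\{\{A\},V\setminus\{A\}\}$. The verification that this is a valid facet reduces to the connectedness of $V\setminus\{A\}$ in $G_Z$: the leaf property makes $R_i\setminus\{A\}$ a subtree of the red tree on $R_i$, and at least $d-2\geq 1$ of the $d-1$ blue edges between $R_1$ and $R_2$ are not incident to $A$, hence connect $R_i\setminus\{A\}$ to $R_{3-i}$. Applying Lemma \ref{belt} to $F$ and the red partition, the four intersections $\{A\}$, $\emptyset$, $R_i\setminus\{A\}$, $R_{3-i}$ are all connected, so $F$ shares a belt with the red facet; the blue case is symmetric. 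To rule out belt distance $1$, note that a common belt for red and blue would give some $R_i\subseteq B_j$, and since blue edges only lie between $R_1$ and $R_2$ (Lemma \ref{trees}) the component $B_{3-j}\subseteq R_{3-i}$ has no internal blue edge, so $|B_{3-j}|=1$; Lemma \ref{sinvert} then forces the $K_{2,d-1}$ structure, in which $R_2\cap B_1$ consists of $d-1\geq 2$ shared vertices with no edges between them, contradicting Lemma \ref{belt}.

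For the \emph{only if} direction, let $F$ with partition $\{X_1,X_2\}$ be an intermediate facet. Applying Lemma \ref{belt} to $F$ paired with the red facet and then with the blue facet, after relabeling I reduce to one of two configurations: (a) $X_1\subseteq R_1\cap B_1$, or (b) $X_1\subseteq R_1$ while $B_2\subseteq X_1$. In (b) we have $B_2\subseteq R_1$, whose blue tree is edgeless by Lemma \ref{trees}, so $|B_2|=1$ and Lemma \ref{sinvert} gives $G_Z\cong K_{2,d-1}$, whose $d-1\geq 2$ shared vertices are all common leaves.

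In configuration (a), the connectedness of $R_1\setminus X_1$ in $G_Z$ together with the absence of blue edges inside $R_1$ forces $R_1\setminus X_1$ to be a subtree of the red tree on $R_1$, and analogously $B_1\setminus X_1$ is a subtree of the blue tree on $B_1$. I select $F$ with $|X_1|$ minimal. If $|X_1|=1$, say $X_1=\{A\}$, then removing a single vertex from a tree keeps it connected only if that vertex is a leaf (the degenerate cases $|R_1|=1$ or $|B_1|=1$ again fall under Lemma \ref{sinvert}); so $A$ is a leaf of both trees, giving the desired common leaf.

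The main obstacle is excluding $|X_1|\geq 2$ in configuration (a). My plan is to analyze the structure of $X_1$: the restriction of the red tree on $R_1$ to $X_1$ decomposes into subtrees each attached to $R_1\setminus X_1$ by a single cross edge, so each such subtree contributes at least one vertex of $X_1$ that has red degree $1$ in $R_1$, i.e.\ a leaf of $G_R$; the analogous count holds for blue. The connectedness of the induced subgraph on $X_1$ bounds the total number of red plus blue subtree components of $X_1$ by $|X_1|+1$, which should force an overlap between red and blue leaves lying in $X_1$. Locating such a common leaf $A$ inside $X_1$ and verifying that the smaller partition $\{X_1\setminus\{A\},X_2\cup\{A\}\}$ is still a valid intermediate facet will contradict the minimality of $|X_1|$ and reduce us to $|X_1|=1$.
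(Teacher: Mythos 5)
Your ``if'' direction is essentially the paper's: use the singleton partition $\{\{A\}, V\setminus\{A\}\}$ and Lemma \ref{belt}, and your explicit verification that belt distance is not $1$ (which the paper delegates to Lemma \ref{sinvertdist} and leaves implicit otherwise) is a welcome addition, though you have a small slip: the disconnected intersection that blocks a common belt for red and blue in $K_{2,d-1}$ is $R_2\cap B_2$ (the $d-1$ vertices $v_1,\dots,v_{d-1}$, with no edges among them), not $R_2\cap B_1$. Your reduction of the ``only if'' direction to configurations (a) $X_1\subseteq R_1\cap B_1$ and (b) $B_2\subseteq X_1\subseteq R_1$ is valid, and your handling of (b) via $|B_2|=1$ and Lemma \ref{sinvert} is fine.

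The genuine gap is in configuration (a): you fail to notice that $X_1\subseteq R_1\cap B_1$ already forces $|X_1|\le 1$, and instead launch into a minimality-plus-subtree-counting scheme that you explicitly flag as unfinished (``should force,'' ``my plan is''). By Lemma \ref{trees}, any edge of $G_Z$ between two vertices of $R_1$ is necessarily red (blue edges connect the two red components), and any edge between two vertices of $B_1$ is necessarily blue; hence the induced subgraph $G(R_1\cap B_1)$ has no edges at all. Since $X_1$ must induce a connected subgraph (Lemma \ref{pi-face}), $|X_1|=1$ follows at once, and then the connectedness of $R_1\setminus\{A\}$ and $B_1\setminus\{A\}$ makes $A$ a leaf of both trees. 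This is in substance what the paper does: it observes that $G(F_1)$, being connected with only red edges, would contain a red edge if $|F_1|>1$, and that red edge would sit inside a blue component --- a contradiction with Lemma \ref{trees}. Your counting plan is pointed in the wrong direction: if you carry it out, you would find $p=q=|X_1|$ (since $G(X_1)$ is edgeless, every red/blue restriction is totally disconnected), which violates your own bound $p+q\le|X_1|+1$ for $|X_1|\ge 2$ and merely re-derives that the case is vacuous, rather than producing a common leaf inside $X_1$ to feed a minimality descent. As written, the proof is not complete; the missing step is the one-line edge-coloring observation above.
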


\begin{proof}
If one red or blue component consist of one vertex then we already proved the statement in lemmas \ref{sinvert} and \ref{sinvertdist}. So from now we will consider only the case of at least two vertices in every red or blue connected component.

If $A$ is a leaf in both red and blue subgraphs then consider a partition of vertex set of graph $G_Z$ into two sets one of which consist of the single vertex $A.$ By lemma \ref{pi-face} this partition determines a facet $F$ of $Z$ because if we remove vertex $A$ from $G$ then we will get a connected graph since there are exactly two red components and at least one blue edge that connects different components due to lemma \ref{trees}. And by lemma \ref{belt} this facet $F$ is in one belt with each red or blue facets because removing a leaf from a tree will give us a new connected graph.

Now assume that belt distance between red and blue facets is equal to 2, then there exists a facet $F$ with correspondent partition of vertex set of graph $G$ into sets $F_1$ and $F_2$ that satisfy lemma \ref{belt} for both red and blue facets. So one of these sets is a subset of one of red vertex sets $R_1$ or $R_2.$ Without loss of generality we can assume that $F_1\subset R_1.$ If $|F_1|>1$ (here and later $|X|$ denotes the cardinality of the vertex set $X$) then both subgraphs $G(F_2)$ and $G(F_1)$ has at least one red edge because $F_2\supset R_2$ and $R_2$ has at least two vertices and $R_2$ is connected by red edges, moreover by lemma \ref{belt} the intersection set $R_1\cap F_1=F_1$ must be connected and there are only red edges in subgraph $G(R_1)$. By lemma \ref{belt} applied to the blue facet and the facet $F$ one of sets $F_1$ and $F_2$ is a subset of one of sets $B_1$ and $B_2$ but in this case there is a red edge that connects two vertices from $B_1$ or two vertices from $B_2$ and this contradicts with lemma \ref{trees}, so $|F_1|=1$ and $F_1=\{A\}$. Again by lemma \ref{belt} all three induced subgraphs $G(\{A\}), G(R_1\setminus\{A\})$ and $G(R_2)$ must be connected and graph $G(R_1\setminus\{A\})$ is connected if and only if $A$ is a leaf of the red subgraph. By the same reason $A$ must be a leaf of the blue subgraph of $G.$
\end{proof}

\begin{thm}\label{pimax}
The belt distance between red and blue facets in a $d$-dimensional symmetric $\Pi$-zonotope is not greater than $2$ if $3\leq d\leq 6$ or $d=8$ and is not greater than $3$ in other cases. These bounds are sharp. So in notations of lemma $\ref{max}$ $$\xi_\Pi(d)=\left\{
\begin{array}{ll}
2, & \text{ if }3\leq d\leq 6 \text { or } d=8,\\
3, &\text{ if }d= 7 \text { or } d\geq 9.
\end{array}
\right.$$
\end{thm}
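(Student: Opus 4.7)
The overall strategy pivots on Lemma~\ref{leaves}: the belt distance between red and blue facets equals~$2$ exactly when $G_R$ and $G_B$ share a common leaf. The argument splits into three parts --- (A)~showing a common leaf must exist when $3 \leq d \leq 6$ or $d = 8$, (B)~producing an explicit belt path of length~$3$ in the remaining cases, and (C)~building sharp examples for $d = 7$ and $d \geq 9$.

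\emph{Part A.} After disposing of the singleton case (some $R_i$ or $B_j$ of size~$1$) via Lemmas~\ref{sinvert} and~\ref{sinvertdist}, assume all four intersections $A_{ij} = R_i \cap B_j$ are nonempty, so each of $R_1, R_2, B_1, B_2$ is a bipartite tree on two of the $A_{ij}$'s. A direct degree count in the larger part of a bipartite tree on $p \leq q$ vertices shows that it contains at least $q - p + 1$ leaves in the larger part, and hence $\max(2, q-p+1)$ leaves overall. If no common leaf existed, the red leaves of $R_i$ and blue leaves of $B_j$ inside $A_{ij}$ would be disjoint for each $(i,j)$; summing over $(i,j)$ gives $\ell_R + \ell_B \leq d+1$, and combined with the universal bound $\ell_R, \ell_B \geq 4$ this forces $d \geq 7$, ruling out $d \leq 6$. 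For $d = 8$ ($d+1 = 9$) I would enumerate the six integer partitions of~$9$ into four positive parts and show that each forces a leaf overlap in some $A_{ij}$. The critical case is $\{2,2,2,3\}$: placing the $3$ at $A_{22}$, both $R_2$ and $B_2$ (bipartite on $2{+}3$) contribute at least two leaves each to $A_{22}$, which cannot fit disjointly in three vertices; the other five partitions of~$9$ have $\ell_R + \ell_B$ exceeding $d+1$ by an even larger margin via the same bipartite count.

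\emph{Part B.} Given no common leaf, pick $v \in L(G_R)$ and $w \in L(G_B)$ (then $v \neq w$ automatically) and consider the belt path
\[
F_R \;\longleftrightarrow\; F_v \;\longleftrightarrow\; F_w \;\longleftrightarrow\; F_B, \qquad F_v = \bigl\{\{v\}, V \setminus \{v\}\bigr\}, \quad F_w = \bigl\{\{w\}, V \setminus \{w\}\bigr\}.
\]
Both $F_v$ and $F_w$ are genuine facets by Lemma~\ref{pi-face}: in the non-singleton case $\deg_B(v) \leq d-2$, so blue edges not incident to $v$ still connect $R_1 \setminus \{v\}$ with $R_2$, and symmetrically for $w$. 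The outer steps are belt steps by Lemma~\ref{belt}: for $F_R \leftrightarrow F_v$ the empty intersection is $R_2 \cap \{v\}$ and the remaining three pieces $\{v\}, R_1 \setminus \{v\}, R_2$ are connected because $v$ is a leaf of $R_1$. The middle step $F_v \leftrightarrow F_w$ requires $V \setminus \{v, w\}$ to induce a connected subgraph of $G$, and this connectivity is the main obstacle. I would exploit the freedom of having at least two leaves in each of $R_1, R_2, B_1, B_2$: a case analysis on the positions of $v, w$ in the $A_{ij}$-matrix, together with the bipartition of $G$ from Corollary~\ref{even} and the cross-coloring rules of Lemma~\ref{trees}, should show that among admissible pairs at least one satisfies the required connectivity.

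\emph{Part C.} For $d = 7$ take $|A_{11}| = |A_{12}| = |A_{21}| = |A_{22}| = 2$ and realize each of $R_1, R_2, B_1, B_2$ as a $4$-vertex bipartite path, arranging the pairings so that within each $A_{ij}$ the red-path endpoint and the blue-path endpoint are the two distinct vertices of $A_{ij}$; then $L(G_R)$ and $L(G_B)$ are disjoint and Lemma~\ref{leaves} gives belt distance $\geq 3$. For $d \geq 9$, use a partition of $V$ into four parts of sizes $\geq 2$ that are as equal as possible (e.g.~$(2,2,3,3)$ for $d=9$, $(2,3,3,3)$ for $d=10$, $(3,3,3,3)$ for $d=11$, $(3,3,3,4)$ for $d=12$, and analogously beyond), again taking all four trees to be bipartite paths and assigning their leaves in each $A_{ij}$ to red or blue to avoid overlap. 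The construction fails exactly at $d = 8$ because the only feasible multiset with all trees paths is $\{2,2,2,3\}$, which is precisely the overlap-forcing configuration from Part~A; this explains why $\xi_\Pi(8) = 2$ breaks the otherwise monotone pattern and concludes the theorem.
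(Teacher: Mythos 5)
Your Part B contains the real gap. The paper's route to the bound $3$ for $d=7$ and $d\geq 9$ is quite different and avoids the difficulty you run into: since $G$ has $d+1$ vertices and only $2d-2$ edges, there is a vertex $A$ of degree at most $3$; if $\deg A=2$ it is a common leaf (distance $2$), and if $\deg A=3$ (say one red, two blue edges) the facet given by $\{A\},V\setminus\{A\}$ is one belt step from the red facet, after which one projects along the $d-3$ zone vectors common to this facet and the blue facet and invokes the already-established bound $2$ in dimension $3$ (projection along common zone vectors does not decrease belt distance, as in the proof of Lemma \ref{max}). Your replacement --- the path $F_R\leftrightarrow F_v\leftrightarrow F_w\leftrightarrow F_B$ through two singleton facets --- stands or falls with the claim that some red leaf $v$ and blue leaf $w$ can be chosen with $G(V\setminus\{v,w\})$ connected, and you explicitly leave this unproved (``a case analysis \dots should show''). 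This is not a routine verification: it is a global connectivity statement about an arbitrary symmetric graph with no common leaf, and nothing in Lemmas \ref{trees}--\ref{belt} hands it to you; as written, the upper bound, which is the core of the theorem, is not established. Either prove that claim in full, or switch to the paper's low-degree-vertex-plus-projection argument.

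Two smaller points. In your $d=8$ enumeration the assertion that, apart from $\{2,2,2,3\}$, every partition of $9$ has $\ell_R+\ell_B>9$ is false as stated: for the multiset $\{3,3,2,1\}$ with the two $3$'s in one row (or column) the global bound gives exactly $\max(2,|a-b|+1)$-type totals summing to $9$, so you must descend to the cell-level count (the size-$3$ cell opposite the size-$1$ cell receives $3$ blue leaves from a star and at least one red leaf from the $(3,3)$-tree); also the case analysis must run over arrangements of sizes in the $2\times 2$ pattern, not just multisets. Finally, sharpness also requires the value $2$ to be attained for $3\leq d\leq 6$ and $d=8$ (Lemma \ref{sinvertdist} supplies this) and, in your $d=7$ and $d\geq 9$ examples, the observation that the red and blue facets are not in one belt (all four sets $R_i\cap B_j$ are nonempty, so Lemma \ref{belt} excludes distance $1$); your Part C constructions themselves are fine and in the same spirit as the paper's explicit no-common-leaf graphs.
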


\begin{proof}
Note that zonotope from lemma \ref{sinvert} satisfies this lemma and further we will consider that all red and blue components has at least two vertices. Also this zonotope gives us an example for $d\leq 6$ and $d=8$ so we will need to construct examples only for other dimensions. Now suppose that the zonotope $Z(V_R\cup V_B)$ is distinct from the zonotope described in lemma \ref{sinvert}.

We have three possibilities. If $d\leq 6$ then every red component is a tree with at least $2$ vertices and then there are at least two leaves in each component so there are at least four red leaves and by the same reason there are at least four blue leaves. There are $d+1\leq 7$ vertices in graph $G$ in total so there is a vertex that is a blue and a red leaf at the same time. So we can apply lemma \ref{leaves} and this case is done.

If $d=8$ then there are $9$ vertices in graph $G$ and if each of red and blue forests has at least $5$ leaves then this case is similar to the previous one. Assume that there is no vertex of $G$ that is red and blue leaf at the same time then one of forests, say red, has exactly $4$ leaves and in this case both trees $G(R_1)$ and $G(R_2)$ must be just some simple paths. 

There are three possibilities for number of vertices in sets $R_1$ and $R_2$: 2 and 7, 3 and 6, 4 and 5. In all three cases one can easily check by simple enumeration that any graph that satisfies lemma \ref{trees} and corollary \ref{oneedge} has a vertex of degree 2 that incident to one red and one blue edges.

And the only case that we still need to prove is $d=7$ or $d\geq 9.$ In graph $G$ there are $d+1$ vertices and $2d-2$ edges so there is a vertex $A$ with degree at most 3. If $A$ has degree 2 then it is a leaf in both red and blue subgraph and belt distance between red and blue facets is 2. If $A$ has degree 3 then we can assume that there is one red edge and two blue edges at $A.$ Consider the partition of the vertex set $V$ of $G$ into sets $\{A\}$ and $V\setminus\{A\}.$ This partition determines a facet of $Z$ because red edges between vertices of $V\setminus\{A\}$ forms two trees and there are at least one blue edge in correspondent induced subgraph since there are at least 6 blue edges in $G$ and we deleted only 2 of them. By lemma \ref{belt} facet $F$ is in one belt with the red facet of $Z.$

The facet $F$ and the blue facet of $Z$ has $d-3$ joint linearly independent zone vectors (all except two blue edges at $A$) and if we project $Z$ along these $d-3$ vectors we will not decrease belt distance between $F$ and the blue facet. After this projection we will obtain a $3$-dimensional $\Pi$-zonotope and its belt diameter is at most 2 so belt distance in $Z$ between the red and the blue facets was at most 3.

Now for $d=7$ and $d\geq 9$ we will construct examples of graphs of symmetric $\Pi$-zonotopes with no vertices that is red and blue leaves at the same time. If these graph will satisfy lemma \ref{trees} then we will show that for every such $d$ there exist a symmetric $d$-dimensional $\Pi$-zonotope with belt distance 3 between red and blue facets.

For odd dimension $d=2n+3$ with $n\geq 2$ the graph will have $2n+4$ vertices, denote all vertices as shown on the following figure. Then our graph will have following blue edges: $A_1B_{2j}$ for all $j$ from 1 to $n$, $A_3B_{2n}$, $A_2B_1$, $A_4B_{2j-1}$ for all $j$ from 1 to $n.$ It is easy to check conditions of lemma \ref{trees} for this graph.

\begin{figure}[!ht]
\begin{center}
\includegraphics[height=3cm]{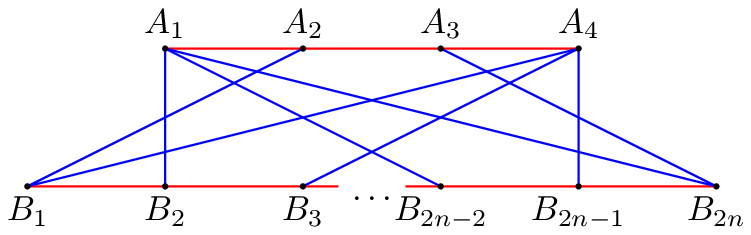}
\caption{Example of graph for odd-dimension zonotope.}
\end{center}
\end{figure}

For even dimension $d=2n+4$ with $n\geq 3$ the graph will have $2n+5$ vertices, denote all vertices as shown on the following figure. Then our graph will have following blue edges: $A_1B_1$, $A_1B_3$, $A_3B_1$, $A_5B_{2j-1}$ for all $j$ from 2 to $n$, $A_2B_{2j}$ for all $j$ from 1 to $n$, $A_4B_{2n}.$ It is easy to check conditions of lemma \ref{trees} for this graph.

\begin{figure}[!ht]
\begin{center}
\includegraphics[height=3cm]{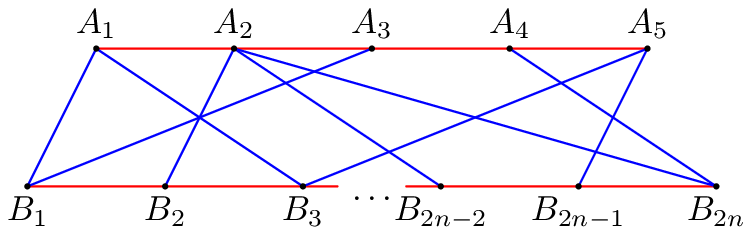}
\caption{Example of graph for odd-dimension zonotope.}
\end{center}
\end{figure}
\end{proof}

\begin{thm}
The maximal belt diameter of $d$-dimensional $\Pi$-zonotope is not greater than $2$ if $3\leq d\leq 6$ and is not greater than $3$ if $d\geq 7,$ these bounds are sharp for any dimension.
\end{thm}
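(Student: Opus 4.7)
The plan is to derive both the upper bound and the sharpness from results already available in the paper, so the bulk of the work reduces to packaging Theorem~\ref{pimax} through Lemma~\ref{max} and then producing the required examples. For the upper bound, Lemma~\ref{max} asserts that the belt diameter of any $d$-dimensional $\Pi$-zonotope is bounded by $\max_{1\leq i\leq d}\xi_\Pi(i)$, and Theorem~\ref{pimax} lists the values of $\xi_\Pi(i)$ explicitly. When $3\leq d\leq 6$ every $\xi_\Pi(i)$ with $i\leq d$ equals $2$ (trivially for $i\leq 2$), giving the bound $2$. When $d\geq 7$ the maximum picks up $\xi_\Pi(7)=3$ and, since no $\xi_\Pi(i)$ exceeds $3$, the bound is $3$.

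For sharpness in the low-dimensional range $3\leq d\leq 6$, I would use the symmetric $\Pi$-zonotope with graph $K_{2,d-1}$ from Lemma~\ref{sinvert}. By Lemma~\ref{sinvertdist} its red and blue facets are at belt distance exactly $2$, and the upper bound just obtained forces the belt diameter to be exactly $2$ in each of these dimensions.

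For sharpness when $d\geq 7$ I split into cases. When $d=7$ or $d\geq 9$ the symmetric graphs constructed at the end of the proof of Theorem~\ref{pimax} furnish $d$-dimensional symmetric $\Pi$-zonotopes whose red and blue facets are already at belt distance $3$, and this clearly forces belt diameter $3$. The remaining case $d=8$ is the delicate one, because $\xi_\Pi(8)=2$, so no $8$-dimensional symmetric $\Pi$-zonotope can achieve belt diameter $3$ on a pair of conjugate facets. To handle it I would start from a $7$-dimensional symmetric $\Pi$-zonotope $Z_7$ (with graph $G$ on $8$ vertices) whose belt diameter is $3$, and form the graph $G'$ obtained by attaching a pendant vertex $w$ to an arbitrary vertex $v$ of $G$. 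The new zone vector $\ib e_v-\ib e_w$ is linearly independent of all others (only it has nonzero $w$-coordinate), so by Lemma~\ref{dimgraph} the zonotope $Z_{G'}$ is $8$-dimensional, and combinatorially it is the prism $Z_7\times[0,1]$.

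The step I expect to require the most care is verifying that this prism construction preserves belt distances among the ``side'' facets $f\times[0,1]$ for $f$ a facet of $Z_7$. Using Lemma~\ref{pi-face} applied to $G'$, the partitions of $V(G')$ into connected pieces either place $w$ in the same block as $v$ (so the partition is pulled back from a partition of $V(G)$ and the resulting face is the corresponding face of $Z_7$ times $[0,1]$) or isolate $w$ as its own block (giving a face contained in one of the two top/bottom copies of $Z_7$). Applying Lemma~\ref{belt} in both cases, one checks that the belt graph induced on the side facets is isomorphic to the Venkov graph of $Z_7$, while the two new ``top/bottom'' facets each belong to a common belt only with side facets that contain the zone vector $\ib e_v-\ib e_w$; hence no shortcut through them exists between two side facets at $Z_7$-distance $3$. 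This shows belt diameter at least $3$, matching the upper bound and completing the proof.
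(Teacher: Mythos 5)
Your upper bound argument (Lemma~\ref{max} plus Theorem~\ref{pimax}) and your sharpness examples for $3\leq d\leq 6$, $d=7$, and $d\geq 9$ all coincide with the paper, and you correctly isolate $d=8$ as the only delicate case, since $\xi_\Pi(8)=2$ forces a non-symmetric example. However, the prism construction you propose for $d=8$ cannot work.

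The prism of any parallelohedral zonotope has belt diameter at most $2$, because the top/bottom pair of facets is in a belt with \emph{every} pair of side facets. Concretely, in $Z_{G'}$ the top/bottom facet corresponds to the partition $(\{w\},V(G))$, while a side facet corresponds to a partition $(X\cup\{w\},Y)$ with $v\in X$ and $(X,Y)$ a facet partition of $G$. The four intersections required by Lemma~\ref{belt} are $\{w\}$, $\emptyset$, $X$, $Y$, all inducing connected subgraphs, so these two facets always share a belt. (Geometrically, for any facet $f$ of $Z_7$ the codimension-$2$ face $f\times\{0\}$ generates a $4$-belt consisting of $Z_7\times\{0\}$, $Z_7\times\{1\}$, $f\times[0,1]$ and its opposite.) Hence the top/bottom vertex of the Venkov graph of the prism is adjacent to everything, and every pair of side facets is at distance at most $2$ through it. Your sentence that the top/bottom shares a belt ``only with side facets that contain the zone vector $\ib e_v-\ib e_w$'' is vacuous, since every side facet of a prism contains the prism direction; the inference that ``no shortcut through them exists'' is therefore the reverse of what actually happens, and the claimed lower bound of $3$ fails. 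The paper instead exhibits a specific $9$-vertex graph with two facets $F_1,F_2$ (neither a prism nor a conjugate red/blue pair of a symmetric zonotope) given by the partitions $\{A_1A_2A_3A_4A_5\},\{B_1B_2B_3B_4\}$ and $\{A_1A_2A_4B_2B_4\},\{A_3A_5B_1B_3\}$, and checks by a four-case enumeration using Lemma~\ref{belt} that no third facet shares a belt with both, so their belt distance is exactly $3$. Any fix of your argument must similarly avoid adding a vertex of degree one, because that operation always collapses the belt diameter to at most $2$.
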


\begin{proof}
For all dimensions except $d=8$ this theorem immediately follows from lemma \ref{max} and theorem \ref{pimax}. For $d=8$ we have an estimate $3$ for maximal belt diameter from lemma \ref{max} and theorem \ref{pimax} and we need to find an example of (non-symmetric) $\Pi$-zonotope with belt diameter 3. Consider a zonotope with the graph on the following figure and two facets $F_1$ and $F_2$ of this zonotope determined by partitions $X_1=\{A_1A_2A_3A_4A_5\},Y_1=\{B_1B_2B_3B_4\}$ and $X_2=\{A_1A_2A_4B_2B_4\},Y_2=\{A_3A_5B_1B_3\}$ respectively.

\begin{figure}[!ht]
\begin{center}
\includegraphics[height=3cm]{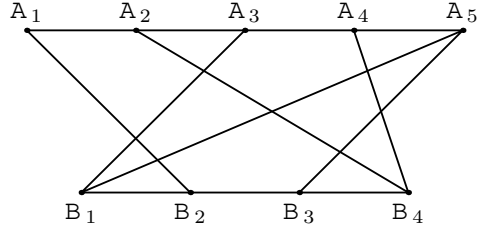}
\caption{Graph of $8$-dimensional $\Pi$-zonotope with belt diameter 3.}
\end{center}
\end{figure}

We need to show that there is no facet $F$ with partition into sets $X$ and $Y$ of the vertex set of this graph that satisfy lemma \ref{pi-face} for both facets $F_1$ and $F_2.$ Assume that such a face exists. Then by lemma \ref{belt} one of sets $X$ or $Y$ must contain one of sets $X_1,Y_1$ and $X$ or $Y$ must contain $X_2$ or $Y_2.$ But all four intersections $X_i\cap Y_j$ are nonempty and then one of sets, say $X$, must contain one subset from $X_1,Y_1$ and one from $X_2,Y_2.$

We have four possibilities:

\begin{itemize}
\item The set $X$ contains $X_1$ and $X_2.$ Then $X$ contains points $A_1,A_2,A_3,A_4,A_5,B_2,B_4.$ Due to lemma \ref{belt} intersection $X\cap Y_2$ must induce a connected subgraph and then $B_1\in X.$ On the other hand $X\cap Y_1$ must induce a connected subgraph so $B_3\in X$ and $Y$ is empty and this impossible.

\item The set $X$ contains $X_1$ and $Y_2.$ Then $X$ contains points $A_1,A_2,A_3,A_4,A_5,B_1,B_3.$ Due to lemma \ref{belt} intersection $X\cap Y_1$ must induce a connected subgraph and then $B_4\in X.$ On the other hand $X\cap X_2$ must induce a connected subgraph so $B_2\in X$ and $Y$ is empty and this impossible.

\item The set $X$ contains $Y_1$ and $X_2.$ Then $X$ contains points $A_1,A_2,A_4,B_1,B_2,B_3,B_4.$ Due to lemma \ref{belt} intersection $X\cap X_1$ must induce a connected subgraph and then $A_3\in X.$ On the other hand $X\cap Y_2$ must induce a connected subgraph so $A_5\in X$ and $Y$ is empty and this impossible.

\item The set $X$ contains $Y_1$ and $Y_2.$ Then $X$ contains points $A_3,A_5,B_1,B_2,B_3,B_4.$ Due to lemma \ref{belt} intersection $X\cap X_1$ must induce a connected subgraph and then $A_4\in X.$ On the other hand $X\cap X_2$ must induce a connected subgraph so $A_1$ and $A_2$ are in $X$ and $Y$ is empty and this impossible.
\end{itemize}

So all four possibilities are impossible and the theorem is proved.
\end{proof}

Now we will establish connection between belt diameter of parallelohedron and its combinatorial diameter.

\begin{defin}
The {\it combinatorial diameter} of polytope $P$ is the diameter of edge graph of its dual polytope, i.e. graph with vertices correspondent to facets of $P$ and with edges connecting facets adjacent by a face of codimension 2.
\end{defin}

\begin{thm}
If $d$-dimensional parallelohedron $P$ has belt diameter $k$ then its combinatorial diameter is not greater than $k+1$.
\end{thm}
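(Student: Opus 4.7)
The plan is to establish $d_P(F, G) \le k + 1$ for any two facets $F, G$ of $P$, where $d_P$ denotes distance in the edge graph of the dual polytope. The starting point is the remark from Section~1 that the Venkov graph is the quotient of the dual edge graph by the antipodal involution $F \mapsto -F$, so every Venkov-graph path lifts (non-uniquely) to a walk of comparable length in the dual edge graph. I split into two cases: $[F] \ne [G]$ and $[F] = [G]$.

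When $[F] \ne [G]$, let $\ell = d_V([F], [G]) \le k$ and fix a Venkov geodesic $[F] = [F_0], [F_1], \dots, [F_\ell] = [G]$. Each consecutive pair $[F_{i-1}], [F_i]$ lies in a common belt $B_i$, which by the Minkowski condition forms a $4$- or $6$-cycle in the dual edge graph; since $P$ is centrally symmetric, $B_i$ is stable under the antipodal map and contains both representatives of every pair it meets. I build the lift greedily: set $G_0 = F$, and at each step choose $G_i \in \{F_i, -F_i\}$ to be a neighbour of $G_{i-1}$ inside $B_i$ (such a neighbour exists because $G_{i-1}$ is a vertex of the cycle $B_i$, whose two cyclic neighbours sit in pairs adjacent to $[F_{i-1}]$, and at least one of those adjacent pairs equals $[F_i]$). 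This produces a walk of length $\ell$ in the dual edge graph ending at either $G$ or $-G$.

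If the walk ends at $G$, we are done. Otherwise, I correct the parity by swapping the representative at exactly one step $i$: replacing $G_i$ by $-G_i$ and continuing greedily reflects the remainder of the walk through the centre of $P$ (by central symmetry), so its new endpoint is the antipode of the old one, namely $G$. The cost of this swap is $0$ extra edges when $B_i$ is a $4$-belt (both $F_i$ and $-F_i$ are already cyclic neighbours of $G_{i-1}$) and $+1$ edge when $B_i$ is a $6$-belt (the alternative representative sits at cycle-distance $2$ from $G_{i-1}$). In either subcase $d_P(F, G) \le \ell + 1 \le k + 1$.

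For the case $[F] = [G]$: if $F = G$ then $d_P = 0$, and otherwise $G = -F$. Taking any codimension-$2$ face of $F$, the generated belt $B$ contains both $F$ and $-F$, whose cycle-distance inside $B$ is at most $3$, yielding $d_P(F, -F) \le 3 \le k + 1$ whenever $k \ge 2$. For $k \le 1$, direct inspection (e.g.\ for the $d$-cube or the hexagonal prism, whose Venkov graphs are $K_d$ and $K_4$) shows that every pair of opposite facets admits a common neighbour, giving $d_P(F, -F) \le 2 = k + 1$. The main obstacle I expect is making the parity argument in the third paragraph rigorous: one needs to verify, using both central symmetry of $P$ and the cyclic structure of a belt, that swapping the chosen representative at a single step genuinely toggles the final endpoint and contributes at most one extra edge to the walk.
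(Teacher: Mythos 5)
Your argument is correct in its main thrust and reaches the paper's bound, but by a noticeably different mechanism, so it is worth comparing the two. The paper also walks through the chosen belts inside the dual edge graph, but it charges each belt a cost $s_i\in\{1,2\}$ and then performs local surgery: whenever a belt of cost $2$ is followed by another belt of cost $2$ (or of cost $1$), the two arcs are replaced by the complementary half-belts, so after repeatedly cancelling the $2$'s pairwise at most one belt costs $2$ and the total length is at most $k+1$. You instead take the greedy lift that spends exactly one edge per belt, observe that it must terminate at $G$ or $-G$, and repair the parity by swapping the representative at a single step and reflecting the tail of the walk through the centre of symmetry, at a cost of at most one extra edge ($0$ in a $4$-belt, $+1$ in a $6$-belt). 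Once you state precisely that central symmetry maps facets to facets, preserves adjacency, and fixes each belt setwise (so the antipode of a valid tail is again a valid walk), your reflection argument closes rigorously and avoids the paper's case analysis of consecutive belts; conversely, the paper's surgery never needs to track which member of the final pair the walk hits. You are also more careful than the paper about the antipodal case $G=-F$, which the paper passes over in silence: your bound $d_P(F,-F)\le 3$ settles it for all $k\ge 2$, which covers every application made in the paper. The one genuine soft spot is your treatment of $k\le 1$ there: inspecting the cube and the hexagonal prism does not prove the claim for an arbitrary parallelohedron of belt diameter $1$; to finish you would need a general argument that such a facet lies in some $4$-belt or otherwise admits a facet adjacent to both $F$ and $-F$. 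Since this marginal sub-case is also not addressed by the paper's own proof, it does not affect the comparison of the main arguments.
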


\begin{proof}
Consider two arbitrary facets of $P$ and sequence of at most $k$ belts $\Gamma_1,\ldots, \Gamma_n$ that connects these facets. If we construct a path on the edge graph of dual polytope using only facets of these belts then on each belt we will need to do $s_i$ steps where $s_i$ is equal to $1$ or $2.$ Now we will show how to decrease the number of belts with two steps.

If there are two consecutive belts $\Gamma_i$ and $\Gamma_{i+1}$ with $s_i=s_{i+1}=2$ (this automatically means that both these belts consist of 6 facets) then we replace path on each belt on the complementary half-belts consist of one step. Also if we have two consecutive belts $\Gamma_i$ and $\Gamma_{i+1}$ with $s_i=2$ and $s_{i+1}=1$ then we can flip these numbers of steps using complementary half-belts of $1$ and at most $2$ steps. The illustration of these two operations is on the next figure, red segments illustrates the path on edges of the dual polytope.

\begin{figure}[!ht]
\begin{center}
\parbox[c]{1.7cm}{\includegraphics[height=2cm]{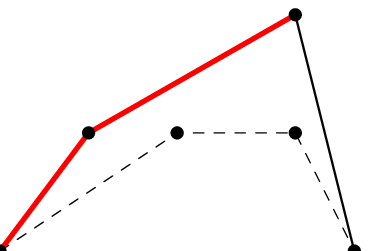}}
$\longrightarrow$
\parbox[c]{1.7cm}{\includegraphics[height=2cm]{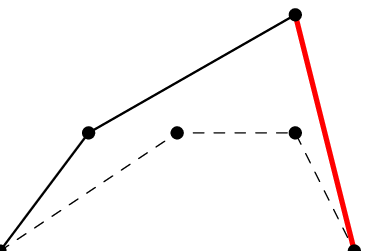}}
\hskip 3cm
\parbox[c]{1.7cm}{\includegraphics[height=2cm]{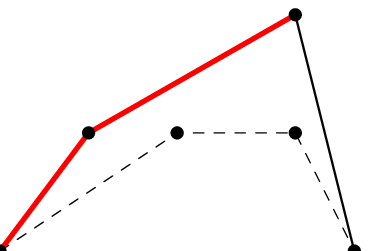}}
$\longrightarrow$
\parbox[c]{1.7cm}{\includegraphics[height=2cm]{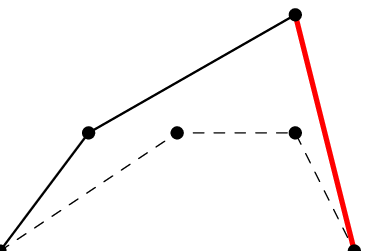}}
\caption{Two operations on belts.}
\end{center}
\end{figure}

Using these operations we can take two closest $s_i$'s that are equal to 2 and replace them by 1's. So in the end we will get the same sequence of belts but with at most one belt with 2 steps on it, so for these two facets we constructed a path of length at most $k+1.$
\end{proof}

\begin{cor}
The combinatorial diameter of $d$-dimensional $\Pi$-zonotope is at most $3$ if $d\leq 6$ and at most $4$ if $d\geq 7$.
\end{cor}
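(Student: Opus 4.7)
The plan is to obtain this corollary as a direct composition of the two results established immediately before it, with no extra combinatorial work required. First I would invoke the theorem that asserts $\xi_\Pi(d)\leq 2$ for $3\leq d\leq 6$ and the full belt-diameter bound (after passing from symmetric to arbitrary $\Pi$-zonotopes via Lemma \ref{max}), so that every $\Pi$-zonotope of dimension $d\leq 6$ has belt diameter at most $2$, and every $\Pi$-zonotope of dimension $d\geq 7$ has belt diameter at most $3$. This is where the substantive content lives, but it is already proved.

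Next I would recall that every $\Pi$-zonotope is a parallelohedron, as noted right after the definition of $\Pi$-zonotope (the condition of Theorem \ref{2or3} is preserved under removing zone vectors from $V(d)$). This is what lets us apply the previous theorem relating belt diameter to combinatorial diameter.

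Then I would apply the theorem stating that for any $d$-dimensional parallelohedron $P$ with belt diameter $k$, the combinatorial diameter of $P$ is at most $k+1$. Substituting the two bounds from the first step yields combinatorial diameter at most $2+1=3$ when $3\leq d\leq 6$, and at most $3+1=4$ when $d\geq 7$, which is exactly the conclusion of the corollary.

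There is essentially no obstacle here: the only thing to verify is that the hypotheses of the parallelohedron-to-combinatorial-diameter theorem apply, i.e.\ that $\Pi$-zonotopes are indeed parallelohedra, which is immediate. The corollary is a bookkeeping assembly of Theorem \ref{pimax}, Lemma \ref{max}, and the combinatorial diameter theorem, so the proof will be no more than a few lines.
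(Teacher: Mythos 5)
Your proposal is correct and is exactly the argument the paper intends: the corollary is stated without its own proof because it is the direct composition of the belt-diameter bound for $\Pi$-zonotopes (at most $2$ for $3\leq d\leq 6$, at most $3$ for $d\geq 7$) with the preceding theorem that a parallelohedron of belt diameter $k$ has combinatorial diameter at most $k+1$. Your explicit check that $\Pi$-zonotopes are parallelohedra, which justifies applying that theorem, matches the remark the paper makes when it introduces $\Pi$-zonotopes.
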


\section{Acknowledgements}

The author would like to thank the Fields Institute at Toronto, ON and Professor Robert Erdahl from Queen's University at Kingston, ON. This work was finished during visit to these places.

This work is financially supported by RFBR (projects 11-01-00633-a and 11-01-00735-a), by the Russian government project 11.G34.31.0053 and by Federal Program ``Scientific and pedagogical staff of innovative Russia''.

\end{document}